\theoremstyle{plain}
\newtheorem{theorem}{Theorem}[section]
\newtheorem{lemma}[theorem]{Lemma}
\newtheorem{corollary}[theorem]{Corollary}
\theoremstyle{definition}
\newtheorem{remark}[theorem]{Remark}
\newtheorem{thm}{Theorem} 
\numberwithin{equation}{section}
\def\supp{\operatorname{supp}}
\def\sgn{\operatorname{sgn}}
\def\Re{\operatorname{Re}}
\def\Im{\operatorname{Im}}
\def\im{\operatorname{Im}\left(\frac{1}{\Phi'}\right)}
\def\re{\operatorname{Re}\left(\frac{1}{\Phi'}\right)}
\def\imm{\operatorname{Im}\left({\Phi'}\right)}
\def\T{{T}_\Phi}
\def\A{\operatorname{Arg}}
\def\osc{\operatorname{osc}}
\def\hs{\operatorname{HS}}
\def\R{\mathbb R}
\def\C{\mathbb C}
\def\D{\mathcal{D}}
\def\e{\varepsilon}
\pgfplotsset{compat=1.11}
\newcommand{\ntm}{\mathcal M} 
\newcommand{\abs}[1]{\left|#1\right|}
\begin{document}

\title{Rellich identities for the Hilbert transform} 
\author {Mar\'{\i}a J.\ Carro, Virginia Naibo and Mar\'{\i}a Soria-Carro} 

\address{Mar\'{\i}a Jes\'us Carro.  Department of  Analysis and Applied Mathematics,              
Universidad Complutense de Madrid, Plaza de las Ciencias 3,
28040 Madrid, Spain. Instituto de Ciencias Matem\'aticas ICMAT, Madrid, Spain}
\email{mjcarro@ucm.es}

\address{Virginia Naibo. Department of Mathematics, Kansas State University.
138 Cardwell Hall, 1228 N. Martin Luther King Jr. Dr., KS  66506, USA.}
\email{vnaibo@ksu.edu}

\address{Mar\'{\i}a Soria-Carro. Department of Mathematics, Rutgers University, 110 Frelinghuysen Rd.
Piscataway, NJ 08854, USA.}
\email{maria.soriacarro@rutgers.edu}

 \subjclass[2010]{42B20}

\keywords{Hilbert transform, Rellich identity, $A_2$-conjecture, Helson--Szeg\"o theorem}
\thanks{First author partially supported by grants PID2020-113048GB-I00 funded by MCIN/AEI/10.13039/501100011033,  CEX2019-000904-S funded by MCIN/AEI/ 10.13039/501100011033 and Grupo UCM-970966 (Spain). Second author partially supported by the NSF under grant DMS 2154113 and the Simons Foundation under grant 705953 (USA)}

\begin{abstract}  We prove  Hilbert transform identities involving conformal maps via the use of Rellich identity and the solution of  the Neumann problem in a graph Lipschitz domain in the plane. We obtain as consequences new $L^2$-weighted estimates for the Hilbert transform, including a sharp bound for its norm as a bounded operator in weighted $L^2$ in terms of a weight constant associated to the Helson-Szeg\"o theorem.

\end{abstract}

\maketitle
\pagestyle{headings}\pagenumbering{arabic}\thispagestyle{plain}

\section{Introduction and main results}\label{sec:intro}

Let $H$ be the Hilbert transform, which is defined as
$$
Hf(x)= \frac{1}{\pi}\lim_{\varepsilon\to 0} \int_{|x-y|>\varepsilon} \frac{f(y)}{x-y} \, dy;
$$
 Hunt--Muckenhoupt--Wheeden~\cite{MR312139} proved that if $1<p<\infty,$ then $H$ is bounded from $L^p(w)$ to $L^p(w)$ if and only if $w\in A_p.$ Here $L^p(w)$ is the Lebesgue space of measurable  functions defined on $\R$ that are $p$-integrable with respect to the measure $w(x) dx$ and $A_p$ denotes the Muckenhoupt class of weights on $\R.$

Applications in Partial Differential Equations (see Fefferman--Kenig--Pipher~\cite{MR1114608}) led to a profound study of 
 sharp bounds for the operator norm of the Hilbert transform in terms of  the $A_p$ constant of the weight, denoted $[w]_{A_p}$ (see Section~\ref{sec:weights} for its definition). In particular, the $A_2$-conjecture for the Hilbert transform consisted in proving that the Hilbert transform satisfies analogous estimates to those by the Hardy-Littlewood maximal operator, that is, 
\begin{equation}\label{eq:A2conj}
||Hf||_{L^2(w)} \lesssim [w]_{A_2} ||f||_{L^2(w)} \quad \forall f\in L^2(w).
\end{equation}
 This conjecture was solved by Petermichl~\cite{MR2354322} in 2007 and numerous other related results followed concerning its   extension to singular integral operators and other classical operators, including Petermichl~\cite{MR2367098}, Hyt\"onen~\cite{MR2912709}, Hyt\"onen et al.~\cite{MR3176607,MR2993026}, Lerner~\cite{MR3085756} and Cruz-Uribe--Martell--P\'erez~\cite{MR2854179}. 

General  necessary and sufficient conditions on the weights for the boundedness of the Hilbert transform on weighted $L^2$ spaces were  first obtained by Helson-Szeg\"o~\cite{MR121608} in 1960  using complex variable techniques. More precisely, they proved that 
\begin{equation*}
H:L^2(w) \longrightarrow L^2(w) \quad\iff\quad w=e^{f_1+K f_2}, \ f_1,f_2\in L^\infty(\R), \ ||f_2||_{L^\infty} <\pi/2, 
\end{equation*}
where $K$ is a version of the Hilbert transform for $L^\infty$-functions (see Section~\ref{sec:weights}). As a consequence, we have
\begin{equation}\label{eq:hs}
w\in A_2  \quad\iff\quad w=e^{f_1+K f_2}, \quad f_1,f_2\in L^\infty(\R), \ ||f_2||_{L^\infty} <\pi/2.
\end{equation}
It is worth mentioning  that the implication to the left is easy to prove, but no direct proof of the implication to the right is presently known; the reader is directed to the work Garc\'{\i}a-Cuerva~\cite{MR783598} for an interesting survey on the topic. In particular, it is proved in \cite{MR783598}  that, if $||f_2||_{L^\infty} <\pi/2$, then
\begin{equation}\label{eq:A2cos}
[e^{Kf_2}]_{A_2} \le   \sec^2 ||f_2||_{L^\infty}.
\end{equation}

Given $w\in A_2,$  define the \textit{Helson-Szeg\"o constant} of $w,$ $[w]_{A_2(\hs)},$ as
$$
[w]_{A_2(\hs)}^2:=  \inf_ {\D_{w}} e^{\osc f_1} \sec^2\|f_2\|_{L^\infty},
$$
where  $\osc f_1 = \sup f_1-\inf f_1$ and   
$$
\D_w := \big\{ (f_1, f_2) :  w= e^{f_1 + Kf_2} \text{ with } f_1 ,f_2 \in L^\infty(\R) \ \text{ and }~ \|f_2\|_{L^\infty} <\pi/2\big\}.
$$
The inequality $
[w]_{A_2}\le  [w]^2_{A_2(\hs)}$ follows from \eqref{eq:A2cos} and, along with \eqref{eq:A2conj},   leads to 
\begin{equation*}
||Hf||_{L^2(w)} \lesssim [w]^2_{A_2(\hs)} ||f||_{L^2(w)}.
\end{equation*}
One of the results in this article improves the dependence on the Helson-Szeg\"o constant of the weight in the above inequality; indeed, we show that the dependence is linear in $[w]_{A_2(\hs)}$. More precisely, we have the following estimate:
\begin{theorem}\label{thm:A2est}  For every $w\in A_2$ and $f\in L^2(w)$ real-valued, it holds that
\begin{equation}\label{eq:A2est}
||Hf||_{L^2(w)} \lesssim [w]_{A_2(\hs)} ||f||_{L^2(w)}. 
\end{equation}
The dependence of $[w]_{A_2(\hs)}$ in \eqref{eq:A2est} is sharp.
\end{theorem}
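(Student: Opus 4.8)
The plan is to reduce the general weighted estimate to the special case of weights of the form $e^{Kf_2}$, and then to prove that case by transporting the Rellich identity to the upper half-plane through a conformal map. First I would record the elementary reduction: given any admissible pair $(f_1,f_2)\in\D_w$, the boundedness of $f_1$ gives
\begin{equation*}
\|Hf\|_{L^2(w)}^2\le e^{\sup f_1}\!\int_\R|Hf|^2 e^{Kf_2}\,dx,\qquad \int_\R|f|^2 e^{Kf_2}\,dx\le e^{-\inf f_1}\|f\|_{L^2(w)}^2 ,
\end{equation*}
so the whole theorem follows once one establishes the scale-free bound
\begin{equation}\label{eq:key}
\|Hf\|_{L^2(e^{Kf_2})}\ls \sec\|f_2\|_{L^\infty}\,\|f\|_{L^2(e^{Kf_2})},
\end{equation}
with an absolute implicit constant; taking the infimum over $\D_w$ then reconstructs exactly the factor $[w]_{A_2(\hs)}=\big(\inf_{\D_w}e^{\osc f_1}\sec^2\|f_2\|_{L^\infty}\big)^{1/2}$. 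I stress that the naive route through \eqref{eq:A2conj} and \eqref{eq:A2cos} only yields $\sec^2\|f_2\|_{L^\infty}$ in \eqref{eq:key}, that is, a \emph{quadratic} dependence; the improvement to a linear power is precisely what the Rellich identity buys.

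For \eqref{eq:key} I would set up the conformal dictionary. Choose the conformal map $\Phi$ of the upper half-plane whose derivative satisfies $\log\Phi'=Kf_2+i\,\sigma$ on $\R$, with $\sigma$ the harmonic conjugate of $Kf_2$; then $|\Phi'|=e^{Kf_2}$ and $\arg\Phi'=\pm f_2$, so the hypothesis $\|f_2\|_{L^\infty}<\pi/2$ is exactly the statement that $\Re\Phi'>0$, i.e. that $\Phi$ maps $\R$ onto a Lipschitz graph with Lipschitz constant $\tan\|f_2\|_{L^\infty}$. Solving the Neumann problem in the graph domain $\Omega=\Phi(\R^2_+)$ and applying the classical Rellich identity there with the constant vertical vector field, one obtains after pulling back by $\Phi$ a \emph{Rellich identity for the Hilbert transform} of the schematic form
\begin{equation*}
\int_\R \re\,(Hf)^2\,dx=\int_\R\re\,f^2\,dx+2\int_\R\im\,f\,(Hf)\,dx,
\end{equation*}
valid for real-valued $f$; for $\Phi=\id$ this degenerates to the $L^2$-isometry $\|Hf\|_2=\|f\|_2$, a useful sanity check.

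With the identity in hand, \eqref{eq:key} follows by an elementary quadratic estimate. Writing $\theta=\|f_2\|_{L^\infty}$, the coefficient $\re=e^{-Kf_2}\cos f_2$ is comparable to $e^{-Kf_2}$ with constants in $[\cos\theta,1]$, while $|\im|\le\sin\theta\,e^{-Kf_2}$; setting $X=\|Hf\|_{L^2(e^{-Kf_2})}$ and $Y=\|f\|_{L^2(e^{-Kf_2})}$ and applying Cauchy--Schwarz to the cross term gives $\cos\theta\,X^2\le Y^2+2\sin\theta\,XY$, whence $X\le\tfrac{\sin\theta+\sqrt{\sin^2\theta+\cos\theta}}{\cos\theta}\,Y\ls\sec\theta\,Y$, which is \eqref{eq:key} for the weight $e^{-Kf_2}$; since $-f_2$ ranges over the same admissible class, this is equivalent to \eqref{eq:key}. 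The main obstacle here is rigour rather than algebra: one must justify the Rellich identity in a merely Lipschitz domain (existence and boundary regularity of the Neumann solution, $L^2$ traces of the gradient, and the integration by parts) and control the behaviour at infinity caused by the non-compactness of the graph domain, e.g. by a truncation and approximation argument exploiting the uniform Lipschitz character of $\partial\Omega$.

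Finally, for sharpness I would test against the power weights $w_a(x)=|x|^{a}$, $0<a<1$, which lie in $A_2$ exactly in this range. Since $\log|x|=\tfrac{\pi}{2}K(\sgn)+\text{const}$, one has $w_a=e^{f_1+Kf_2}$ with $f_1$ constant and $f_2=\tfrac{a\pi}{2}\sgn$, so $\osc f_1=0$ and $[w_a]_{A_2(\hs)}\le\sec\tfrac{a\pi}{2}\sim\tfrac{2}{\pi(1-a)}$ as $a\to1^-$. On the other hand, the power weights are the standard extremisers for the $A_2$ theorem and satisfy $\|H\|_{L^2(w_a)}\gs\tfrac{1}{1-a}$, obtained by testing $H$ against a suitable power-type function. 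Hence the ratio $\|H\|_{L^2(w_a)}/[w_a]_{A_2(\hs)}$ stays bounded below by a positive absolute constant as $a\to1^-$, which shows that the exponent $1$ on $[w]_{A_2(\hs)}$ in \eqref{eq:A2est} cannot be lowered and completes the proof.
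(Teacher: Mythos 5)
Your proposal is correct and follows essentially the same route as the paper: decompose via a Helson--Szeg\"o pair to reduce to a weight comparable to $|\Phi'|^{-1}$ for Kenig's conformal map, apply the conformal-map Rellich identity (the paper's Theorem~\ref{thm:rellich}) together with Cauchy--Schwarz and absorption to obtain the $\sec\|f_2\|_{L^\infty}$ bound, and test against power weights for sharpness. The only differences are in execution, not in strategy: you absorb in a single step using $\cos\theta\, e^{-Kf_2}\le \Re\left(\frac{1}{\Phi'}\right)\le e^{-Kf_2}$ where the paper passes through the two estimates of its Theorem~\ref{thm1}, and for the sharpness lower bound you invoke the standard $A_2$ extremiser computation for $|x|^{a}$ as $a\to 1^-$, whereas the paper derives the (dual, equivalent) bound for $|x|^{\beta}$ as $\beta\to -1^+$ self-containedly from its Rellich-based Corollary with $f_r=\chi_{(0,r)}$.
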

The estimate \eqref{eq:A2est} is sharp in the sense that the norm of the operator $H$ as a bounded operator on $L^2(w)$ is comparable to $[w]_{A_2(\hs)}$ for some $w\in A_2.$
We hope that \eqref{eq:A2est} may lead to a new proof of \eqref{eq:A2conj}  by showing that $[w]_{A_2(\hs)}\lesssim [w]_{A_2}  $, which at the moment is an open question. 

The estimate \eqref{eq:A2est} along with other new $L^2$-weighted estimates for the Hilbert transform proved in this article are consequences of our main result on Rellich-type identities for the Hilbert transform. Such identities involve a conformal map $\Phi$ such that $\Omega=\Phi(\R^2_+)$ is a graph Lipschitz domain, that is, the upper part of the graph of a real-valued  Lipschitz function; the map  $\Phi$ extends as a homeomorphism from $\overline{\R^2_+}$ onto $\overline{\Omega}$ and  $\Phi'(x)$ exists and is non-zero for almost every $x\in \R$ (see details in Section~\ref{sec:cm}). Our main result is the following theorem.

\begin{theorem} \label{thm:rellich}
Let $\Phi$ be a conformal map as described in Section~\ref{sec:cm} and
 $f\in L^2( |\Phi'|^{-1})$ be real-valued. Then
\begin{equation}\label{eq:rellich}
\int_\mathbb R (Hf)^2 \re dx = \int_\mathbb R f^2 \re dx - 2 \int_\mathbb R f \,Hf \im dx,
\end{equation}
and
\begin{equation}\label{eq:rellichbis}
\int_\mathbb R (Hf)^2 \im dx = \int_\mathbb R f^2 \im dx + 2 \int_\mathbb R f \,Hf \re dx. 
\end{equation}
\end{theorem}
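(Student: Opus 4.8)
The plan is to recognize that the two real identities \eqref{eq:rellich} and \eqref{eq:rellichbis} are the imaginary and real parts of a single complex identity, and then to prove that complex identity by a Rellich/Cauchy argument transplanted from $\Omega$ to $\R^2_+$ through the conformal map $\Phi$. Indeed, forming \eqref{eq:rellich}$\,+i\,$\eqref{eq:rellichbis} and using $\re+i\im=\tfrac{1}{\Phi'}$ together with $-\im+i\re=\tfrac{i}{\Phi'}$, every integral collapses against the single weight $1/\Phi'$, and the elementary identity $(Hf)^2-f^2-2i\,f\,Hf=-(f+iHf)^2$ shows that the pair \eqref{eq:rellich}--\eqref{eq:rellichbis} is equivalent to
\begin{equation}\label{eq:complexrellich}
\int_{\R}\frac{\big(f+iHf\big)^2}{\Phi'}\,dx=0.
\end{equation}
Conversely, taking real and imaginary parts of \eqref{eq:complexrellich} returns the two stated identities, so it suffices to establish \eqref{eq:complexrellich}.

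First I would interpret \eqref{eq:complexrellich} analytically. For real-valued $f$, the function $g=f+iHf$ is, with the paper's sign convention for $H$, the nontangential boundary value of a function $G$ holomorphic in $\R^2_+$, namely the analytic completion of the harmonic extension of $f$. Since $\Phi'$ is holomorphic and non-vanishing on $\R^2_+$, the integrand $G^2/\Phi'$ is holomorphic there, and on $\partial\R^2_+=\R$ it equals $g^2/\Phi'$. Thus \eqref{eq:complexrellich} asserts that the boundary values of a function in the Hardy space $H^1(\R^2_+)$ integrate to zero, which holds because their Fourier transform is continuous and supported in $[0,\infty)$, hence vanishes at the origin.

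Next, to make the Rellich mechanism explicit, I would transplant everything to $\Omega=\Phi(\R^2_+)$. Writing $G=\Theta'$ and setting $W=\Theta\circ\Phi^{-1}$, a holomorphic function on $\Omega$, the chain rule gives $W'=(f+iHf)/\Phi'$ along $w=\Phi(x)$, so that $|\nabla\operatorname{Re}W|^2=(f^2+(Hf)^2)/|\Phi'|^2$ on $\partial\Omega$ while $d\sigma=|\Phi'|\,dx$. A direct computation shows that the classical Rellich--Ne\v{c}as identity for $\operatorname{Re}W$ with the constant fields $\alpha=e_1$ and $\alpha=e_2$,
\begin{equation*}
\int_{\partial\Omega}(\alpha\cdot\nu)\,|\nabla\operatorname{Re}W|^2\,d\sigma=2\int_{\partial\Omega}(\alpha\cdot\nabla\operatorname{Re}W)\,\partial_\nu\operatorname{Re}W\,d\sigma,
\end{equation*}
gives precisely the imaginary and real parts of $\int_{\partial\Omega}(W')^2\,dw=0$; pushing this forward by $w=\Phi(x)$, using $dw=\Phi'\,dx$ and $(\Phi^{-1})'=1/\Phi'$, turns $\int_{\partial\Omega}(W')^2\,dw$ into exactly $\int_{\R}(f+iHf)^2/\Phi'\,dx$. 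In this way \eqref{eq:rellich}--\eqref{eq:rellichbis} are the two scalar Rellich identities on $\Omega$ read back on $\R$, which is the conceptual content I would foreground. The Neumann problem enters as the device producing $\operatorname{Re}W$ with the correct boundary data: one solves it in $\Omega$ with Neumann datum dictated by $f$, and the boundary trace of the conjugate function supplies $Hf$.

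The main obstacle is rigor on the unbounded graph Lipschitz domain. The formal divergence and contour computations above must be justified when $\partial\Omega$ is merely Lipschitz and non-compact: this needs the $L^2$-solvability of the Neumann problem on $\Omega$ (in the sense of Jerison--Kenig and Verchota), control of the nontangential maximal function, existence of nontangential boundary limits almost everywhere, and a truncation argument to close the contour at infinity. The key quantitative input is that $|\Phi'|^{\pm 1}$ are $A_2$ weights for the domains of Section~\ref{sec:cm}; this guarantees that $f\in L^2(|\Phi'|^{-1})$ forces $Hf\in L^2(|\Phi'|^{-1})$, whence $g^2/\Phi'\in L^1(\R)$ and, writing $G^2/\Phi'=(G/\sqrt{\Phi'})^2$ with a holomorphic branch of $\sqrt{\Phi'}$, that $G^2/\Phi'\in H^1(\R^2_+)$, so that \eqref{eq:complexrellich} genuinely holds. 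I would therefore first prove the identities for a dense class of smooth, rapidly decaying $f$, where every boundary integral converges absolutely and the contour closes directly, and then pass to the limit in $L^2(|\Phi'|^{-1})$ using the weighted boundedness of $H$.
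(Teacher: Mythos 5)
Your route is genuinely different from the paper's, and its core reduction is correct: adding $i$ times \eqref{eq:rellichbis} to \eqref{eq:rellich} and using $\re + i\im = \tfrac{1}{\Phi'}$, the pair of identities is indeed equivalent to $\int_\R (f+iHf)^2\,(\Phi')^{-1}\,dx=0$, and with the paper's sign convention $f+iHf$ is the nontangential boundary value of the Cauchy integral $G(z)=\tfrac{1}{\pi i}\int_\R \tfrac{f(t)}{t-z}\,dt$, so the statement becomes a Cauchy-theorem/Hardy-space assertion. This is essentially the complex-variable route that the paper itself attributes to Hunt--Muckenhoupt--Wheeden's Lemma 10 (their identity \eqref{eq:HMW} has weight $\Phi'$, but $1/\Phi' = e^{K(-g)}e^{-i(-g)}$ is again of Helson--Szeg\"o form, so the two versions are interchangeable) and explicitly declines to take: the paper instead solves the Neumann problem in $\Omega$ via the cited solvability theorem of Carro--Naibo--Ortiz, applies the harmonic-function Rellich identity \eqref{eq:rellichid} with $e=(0,1)$ and $e=(1,0)$, and transplants to $\R$ through Lemma~\ref{lem:devpsi}. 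What each approach buys: yours is shorter and makes the algebraic structure transparent (the two identities are one complex equation); the paper's is real-variable, inherits all needed nontangential control from the quoted solvability theorem, and avoids the contour/growth issues at infinity that your argument must confront by hand.

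Two of your steps need more than you supply. First, the claim that $G/\sqrt{\Phi'}\in H^2(\R^2_+)$ because its boundary trace lies in $L^2(\R)$ is not valid as stated: a holomorphic function on $\R^2_+$ with nontangential boundary values in $L^2$ need not belong to $H^2$ (consider $e^{-iz}/(z+i)$, whose boundary modulus is $1/|x+i|\in L^2$ but which grows like $e^y$; its boundary integral is $-2\pi i e^{-1}\neq 0$, so the mean-zero conclusion genuinely fails without a Hardy-class hypothesis). You need a uniform bound of $\int_\R |G(x+iy)|^2|\Phi'(x+iy)|^{-1}dx$ over $y>0$, or a nontangential maximal function estimate, neither of which follows from boundary integrability alone. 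Second, in your fallback dense-class argument the phrase ``the contour closes directly'' hides the real issue: $|1/\Phi'(z)|$ may grow as $|z|\to\infty$ (for the cones of Section~\ref{sec:power}, $1/\Phi'(z)\sim z^{1-\alpha}$), so closing the semicircle requires the bound $|1/\Phi'(z)|\lesssim (1+|z|)^{\beta}$ with $\beta<1$, which comes from Kenig's representation $\Phi'=c\,e^{Kg-ig}$ with $\|g\|_{L^\infty}\le\arctan k<\pi/2$ (giving $\beta\approx 2\|g\|_{L^\infty}/\pi<1$, hence a semicircle contribution $O(R^{\beta-1})$ for compactly supported smooth $f$), together with a local uniform-integrability argument to pass from the line $\{y=\epsilon\}$ to the boundary. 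With these two points supplied, your plan does close: $|\Phi'|^{-1}\in A_2$ makes $H$ bounded on $L^2(|\Phi'|^{-1})$, and all four integrals pass to the limit along $f_n\to f$ by Cauchy--Schwarz, since $|{\re}|$ and $|{\im}|$ are dominated by $|\Phi'|^{-1}$.
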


We remark that there is a strong connection between the Helson-Szeg\"o result \eqref{eq:hs} and conformal maps $\Phi$ as in the statement of Theorem~\ref{thm:rellich}. As proved in Kenig~\cite[Theorem 1.10 and Lemma 1.11]{MR556889}, it holds that
\begin{equation*}
|\Phi'|\in A_2
\end{equation*}
 and a converse result is true in the sense that if $w\in A_2$ then there exists $\Phi$ such that $\abs{\Phi'}\sim w.$ Indeed, the latter is a consequence of \eqref{eq:hs}: Let   $w=e^{f_1+Kf_2}$ with $(f_1,f_2)\in D_w$, then it is proved in \cite{MR556889} that there exists a conformal map $\Phi$ from $\R^2_+$ onto a graph Lipschitz domain so that 
\begin{equation}\label{eq:Phihs}
 \Phi'(x) = e^{Kf_2(x)} e^{-i f_2(x)} \quad \text{a.e. } x\in \R;  
 \end{equation}
 therefore, $\abs{\Phi'}=e^{-f_1}w$ and it follows that $\abs{\Phi'}\sim w.$ 
 
The main ingredients  in the proof  of Theorem~\ref{thm:rellich} are the following tools: (a)  the theory of solutions of  the Neumann problem in a graph Lipschitz domain with data in $L^2$ through the use of conformal maps as developed in Carro--Naibo--Ortiz~\cite{MR4542711} and, (b) Rellich's identity, which gives that if $\Phi$ is as in the statement of Theorem~\ref{thm:rellich}, then for every harmonic function $u$ in $\Omega=\Phi(\R^2_+)$ so that $M(\nabla u)\in L^2(\partial\Omega)$ and for every constant vector $e\in \R^2$, 
\begin{equation}\label{eq:rellichid}
\int_{\partial \Omega} |\nabla u|^2 \,(e\cdot \nu) \,d\sigma = 2 \int_{\partial \Omega} (\partial_\nu u) \, (e\cdot \nabla u) \,d\sigma,
\end{equation}
where $M$ is the non-tangential maximal operator and $d\sigma$ denotes integration with respect to arc-length.
The integral identity \eqref{eq:rellichid} is due to Rellich~\cite{MR2456} (see also Escauriaza--Mitrea~\cite[(2.35)]{MR2091359}); this identity and related versions play fundamental roles in questions on elliptic partial differential equations, inverse problems,  acoustic scattering, and the multiplier method; see  Agrawal--Alazard~\cite{AA2022} and references therein.

We note that the proof of \cite[Lemma 10]{MR312139} shows that for an infinitely differentiable function $f$ with compact support in $\R,$ it holds that
\begin{equation}\label{eq:HMW}
\int_{-\infty}^\infty (f+iHf)^2 \Phi'\,dx=0,
\end{equation}
where $\Phi$ is a conformal map  from $\R^2_+$ onto a graph Lipschitz domain such that $\Phi'=e^{K g} e^{-ig}$ for some $g$ satisfying  $\|g\|_{L^\infty}<\frac{\pi}{2}.$ Instances of \eqref{eq:rellich} and \eqref{eq:rellichbis} with $\Phi'$ instead of $1/\Phi'$ can then be deduced by taking the real and imaginary parts of \eqref{eq:HMW}. However, \eqref{eq:rellich} and \eqref{eq:rellichbis}  are not explicitly shown in \cite{MR312139} and the proof of \eqref{eq:HMW} is based on complex variable techniques that are different from the novel approach we use in the proof of Theorem~\ref{thm:rellich}, which holds for more general conformal maps $\Phi$.

The paper is organized as follows.  In Section \ref{sec:prelim}, we present preliminaries regarding weights, the Hilbert transform, conformal mappings and the solution of the Neumann problem in a graph Lipschitz domain in the plane with data in $L^2.$ In Section~\ref{sec:rellichL2}, we prove the Rellich's identity versions for the  Hilbert transform presented in Theorem~\ref{thm:rellich} as well as $L^2$-weighted estimates for $H$ that follow from them, including \eqref{eq:A2est}.
 Other applications of Theorem~\ref{thm:rellich} concerning Hilbert transform identities with power weights are discussed in Section~\ref{sec:power}. Finally, we present in Section~\ref{sec:rellichLp} versions of Rellich identities for the Hilbert transform in weighted $L^p$ spaces.

\section{Preliminaries}\label{sec:prelim}

In this section we present preliminaries regarding weights, the Hilbert transform, conformal mappings and the solution of the Neumann problem in a graph Lipschitz domain in the plane with data in $L^2.$

\subsection{Muckenhoupt weights and the Hilbert transform for $L^\infty$ functions}\label{sec:weights}

Let $w$ be a weight on $\R,$ i.e. a non-negative locally integrable function defined in $\R,$ and $1\le p\le\infty.$ We will denote by $L^p(w)$ the space   of measurable functions  $f:\R\to\mathbb{C}$ such that 
\begin{equation*}
 \|f\|_{L^p(w)}=\left(\int_\R |f(x)|^pw(x)\,dx\right)^{\frac{1}{p}}<\infty,
 \end{equation*}
 with the corresponding changes for $p=\infty.$ If  $w\equiv 1,$  we will use the notation  $L^p(\R)$ instead of  $L^p(w).$ 
 
 For $1<p<\infty,$ a weight $w$ defined on $\R$ belongs to the Muckenhoupt class $A_p$ if 
 \begin{equation}\label{eq:Ap}
[w]_{A_{p}}=\sup _{I\subset \R}\, \left(\frac{1}{|I|}\int_{I} w(x) d x\right)\left(\frac{1}{|I|}\int_{I} w(x)^{1-p^{\prime}}d x\right)^{p-1}<\infty,
\end{equation}
where the supremum is taken over all intervals contained in $\R.$ 
We recall that  $w\in A_p$ if and only if $w^{1-p'}\in A_{p'},$ where $p'$ is the conjugate exponent of $p$ (i.e. $1/p+1/p'=1$); also, $A_p\subset A_q$ if $p<q$ and for every $w\in A_p$ there exist $\varepsilon>0$ such that $w\in A_{p-\varepsilon}.$

We define the operator $K$ as
\begin{equation*}
Kf(x)=\frac{1}{\pi}\lim_{\varepsilon\to 0} \int_{\abs{x-y}>\varepsilon} f(y)\left(\frac{1}{x-y}+\frac{\chi_{\abs{y}>1}(y)}{y}\right)\,dy,
\end{equation*}
which allows to extend the definition of the Hilbert transform to $L^\infty(\R).$

For brevity we will use the notation $A\lesssim B$ to mean that $A\le c\, B$, where $c$ is a constant that may only depend on some of the parameters  but not on the functions or weights involved.

\subsection{Conformal maps and the Neumann problem in a graph Lipchitz domain in the plane}\label{sec:cm}
In this section we define the conformal maps that will be considered throughout this work and recall results from \cite{MR4542711} on the solution of the Neumann problem in a graph Lipschitz domain with data in $L^2$, which is obtained via the use of conformal maps and the solution of the Neumann problem in the upper-half plane.

 Let $\Lambda$  be a curve in the complex plane given parametrically by $\xi(x) = x+ i \gamma (x)$ for $x\in \R$, where $\gamma$ is a real-valued  Lipschitz function  with  constant $k,$  and  consider {\it the graph  Lipschitz domain} 
 \begin{equation}\label{eq:omega}
\Omega=\{ z_1+iz_2\in \mathbb{C}: z_2> \gamma(z_1) \}.
\end{equation}
Then $\Lambda=\partial\Omega$ and, since $\Omega$ is simply connected, there exists a conformal map   $\Phi: \mathbb R^2_+ \longrightarrow \Omega$  such that $\Phi(\infty) =\infty$ and $\Phi(i)=iy_0$ for some $y_0>\gamma(0).$ The map $\Phi$ extends as a homeomorphism from $\overline{\R^2_+}$ onto $\overline{\Omega}$ and $\Phi(x),$ $x\in \R,$ is absolutely continuous when restricted to any finite interval; this implies that $\Phi'(x)$ exists for almost every $x\in \R$ and is locally integrable. It also holds that $\Phi'(x)\ne 0$ for almost every $x\in\R,$    $\lim_{z\to x}\Phi'(z)=\Phi'(x)$ in the non-tangential sense for almost every $x\in \R$  and $|\Phi'|\in A_2.$ If $\Phi'(x)$ exists and is not zero, then it is a vector tangent to $\partial\Omega$ at $\Phi(x).$ See  Kenig~\cite[Theorems 1.1 and 1.10]{MR556889} for the proof of those properties. The inverse of $\Phi$ will be denoted $\Psi.$

Given a measurable function $g$ defined in $\partial\Omega,$ let $\T$  be given by
\begin{equation*} 
\T{g}(x)= |\Phi'(x)| g(\Phi(x)),\quad   x\in \R.
\end{equation*}
Denote by $L^2(\partial\Omega)$ the Lebesgue space of measurable functions defined on $\partial\Omega$ that are square-integrable with respect to arc-length. We note that   $\T$  is a bijection from $L^2(\partial\Omega)$  onto $L^2(|\Phi'|^{-1})$ and  we have $\|g\|_{L^2(\partial\Omega)}=\|\T{g}\|_{L^2(|\Phi'|^{-1})}.$

For $g\in L^2(\partial\Omega),$ consider the classical Neumann  problem in $\Omega:$
\begin{equation}\label{eq:neumann}
  \Delta v =0 \text{ on }\Omega, \qquad \partial_\nu v=g\text{ on } \partial\Omega \quad \text{and}\quad \ntm_\alpha (\nabla v)\in L^2(\partial\Omega).
  \end{equation}
  Here  $\Delta$ is the Laplace operator,  $\nu$ denotes the outward unit normal vector to $\partial\Omega,$  $ \partial_\nu v=\nabla v\cdot \nu$ and the equality  $\partial_\nu v=g$ is meant in the non-tangential convergence sense. For $0<\alpha<\arctan(1/k),$  $\ntm_\alpha$ denotes the non-tangential maximal operator   given by 
 \begin{equation*}
\ntm_\alpha (F)(\xi)= \sup_{z\in \Gamma_\alpha(\xi)} |F(z)|,\quad \xi\in \partial\Omega,
\end{equation*}
for a complex-valued function $F$ defined in $\Omega$ and 
$$\Gamma_\alpha(\xi)=\{z_1+i z_2\in \mathbb{C}: z_2>\text{Im}(\xi)  \text{ and } |\text{Re}(\xi)-z_1|<\tan(\alpha) |z_2-\text{Im}(\xi)|\}.$$ 

Inspired by tools and techniques  from Kenig~\cite{MR545265, MR556889}, it was proved in \cite[Theorem 1.4]{MR4542711} that  for every $g\in L^2(\partial\Omega),$  $v=u_{\T{g}}\circ \Psi$ is a solution of the Neumann problem \eqref{eq:neumann}  and
\begin{equation*}
\|\ntm_\alpha(\nabla v)\|_{L^2(\partial\Omega)} \lesssim \|g\|_{L^2(\partial\Omega)},
\end{equation*}
where, for $f:\R\to \mathbb{C},$ $u_f$ is defined by \begin{equation}\label{eq:solneumannR2}
u_f(x,y):=-\frac{1}{\pi}\int_{\R}  \log\left(\textstyle{\frac{\sqrt{(x-t)^2+y^2}}{1+|t|}}\right) f(t)\,dt,\quad (x,y)\in \R^2_+.
 \end{equation}

 We note that the integral on the right-hand side of \eqref{eq:solneumannR2} is absolutely convergent for all $f$ satisfying $\int_{\R}\frac{|f(t)|}{1+|t|}\,dt<\infty;$ in particular, it is well defined for any $f\in L^2(w)$ with $w\in A_2.$   As shown in \cite{MR4542711},  $u_f$ is a solution of the Neumann problem in the upper half plane: More precisely, if  $w\in A_2$ and $f\in L^2(w),$  then $u_f$ is harmonic in $\R_+^2,$ $\nabla u\cdot {(0,-1)}=f$ on $\R$ in the sense of non-tangential convergence and 
 \begin{equation*}
\|\ntm_\alpha(\nabla u_f)\|_{L^2(w)} \lesssim \|f\|_{L^2(w)}.
\end{equation*}  
 It follows  that 
\begin{align*}
\partial_x u_f(x, y)= -(Q_{y}*f)(x)\quad \text{ and } \quad 
\partial_y u_f(x, y)=-(P_{y} * f)(x), 
\end{align*}
where, for $y>0,$ $P_y $ is the Poisson kernel and $Q_y$ is the conjugate of the Poisson kernel. As a consequence, 
\begin{equation} \label{eq:ufderiv}
\partial_x u_f (x,0) = -H f (x) \quad \hbox{ and } \quad \partial_y u_f(x,0) = -f (x),
\end{equation}
 for almost every $x\in \R$ in the non-tangential convergence sense.
 
 \bigskip
 
 For easier referencing, we state as a theorem the following result mentioned in Section~\ref{sec:intro}. 
\begin{thm}[see proof of Lemma 1.11 in \cite{MR556889}]\label{thm:kenig}
 If $w\in A_2$ and $w=e^{f_1+Kf_2}$ with $(f_1,f_2)\in D_w,$ then there exists a conformal map $\Phi_w$ from $\R^2_+$ onto a graph Lipschitz domain so that 
\begin{equation}\label{eq:kenig}
 \Phi'_w(x) = e^{Kf_2(x)} e^{-i f_2(x)} \quad \text{a.e. } x\in \R.
 \end{equation}
 In particular $\abs{\Phi_w'}=e^{-f_1}w=e^{Kf_2},$ $\A \Phi'_w=-f_2$ and  $\abs{\Phi'_w}\sim w.$ 
 \end{thm}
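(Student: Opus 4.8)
The plan is to construct $\Phi_w$ directly from its prescribed derivative, exploiting the fact that $\log\Phi'_w$ must be the boundary trace of a holomorphic function on $\R^2_+$ whose real and imaginary parts are harmonic conjugates. Concretely, let $u$ be the Poisson extension of $f_2$ to $\R^2_+$ and let $\tilde u$ be its harmonic conjugate, normalized so that its non-tangential boundary values equal $Kf_2$; then $F:=u+i\tilde u$ is holomorphic on $\R^2_+$ with boundary trace $f_2+iKf_2$. I would take as candidate
\begin{equation*}
\Phi'_w(z):=\exp\big(-iF(z)\big),\qquad z\in\R^2_+,
\end{equation*}
which is holomorphic and nowhere zero, and whose non-tangential boundary values are $\exp(-i(f_2+iKf_2))=e^{Kf_2}e^{-if_2}$ a.e.; this yields \eqref{eq:kenig} immediately. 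I would then set $\Phi_w(z):=iy_0+\int_i^z\Phi'_w(\zeta)\,d\zeta$, the integral being path-independent since $\R^2_+$ is simply connected, with the additive constant chosen to realize the normalization $\Phi_w(i)=iy_0$.

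The crucial structural observation is that $\A\Phi'_w=-u$, the Poisson extension of $f_2$, so by the maximum principle $|\A\Phi'_w(z)|\le\|f_2\|_{L^\infty}<\pi/2$ throughout $\R^2_+$. Hence $\Re\Phi'_w>0$ everywhere on the convex half-plane, and the Noshiro--Warschawski criterion guarantees that $\Phi_w$ is univalent. Restricting to the boundary, $\frac{d}{dx}\Re\Phi_w(x)=\Re\Phi'_w(x)=e^{Kf_2(x)}\cos f_2(x)>0$, so the real part increases strictly along $\Phi_w(\R)$, exhibiting the image boundary as a graph over the horizontal axis; its slope equals $\tan(\A\Phi'_w(x))=-\tan f_2(x)$, bounded in absolute value by $\tan\|f_2\|_{L^\infty}$, so the graph is Lipschitz with constant $k=\tan\|f_2\|_{L^\infty}$. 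Combined with univalence, this identifies $\Omega=\Phi_w(\R^2_+)$ as the graph Lipschitz domain lying above that curve, once the behavior at infinity is settled.

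The ``in particular'' assertions are then immediate consequences. Taking moduli and arguments in \eqref{eq:kenig} gives $|\Phi'_w|=e^{Kf_2}$ and $\A\Phi'_w=-f_2$ a.e., while the hypothesis $w=e^{f_1+Kf_2}$ forces $|\Phi'_w|=e^{-f_1}w$; since $f_1\in L^\infty(\R)$, the factor $e^{-f_1}$ is bounded above and below, whence $|\Phi'_w|\sim w$.

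The main obstacle I anticipate is twofold and essentially technical. First, there is the matching of conventions: the operator $K$ differs from the conjugate-function boundary trace by the built-in correction term $\chi_{|y|>1}/y$, so one must verify that this correction is precisely what renders $\tilde u$ well defined for $f_2\in L^\infty(\R)$ and produces $Kf_2$ rather than an ill-defined or constant-shifted trace. Second, the global geometry at infinity: establishing $\Phi_w(\infty)=\infty$ and that the image is \emph{exactly} the region above the Lipschitz graph (rather than a proper subdomain) requires controlling $\Phi_w$ as $|z|\to\infty$, where $f_2\in L^\infty(\R)$ need not decay. This last point is the delicate part of the argument and is precisely where the detailed boundary-correspondence analysis of Kenig~\cite{MR556889} is needed.
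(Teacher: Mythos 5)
The paper does not actually prove this statement: the bracketed attribution ``see proof of Lemma 1.11 in \cite{MR556889}'' \emph{is} the paper's entire proof, the result being imported from Kenig. Your construction --- Poisson-extend $f_2$, take the harmonic conjugate normalized so that its boundary trace is $Kf_2$, set $\Phi'_w=e^{-iF}$, integrate to define $\Phi_w$, and deduce univalence from $\Re \Phi'_w>0$ on a convex domain via Noshiro--Warschawski --- is essentially the construction used in Kenig's proof, so in approach you have reconstructed the cited argument rather than found an alternative to it. The sign bookkeeping is right ($\Phi'_w=e^{\tilde u-iu}$, so $|\Phi'_w|=e^{Kf_2}$ and $\A\Phi'_w=-f_2$ on the boundary), and the ``in particular'' clause is handled correctly.

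There is, however, one step you present as immediate that is a genuine gap, in addition to the two you flag. You pass from $\frac{d}{dx}\Re\Phi_w(x)=\Re\Phi'_w(x)>0$ a.e.\ and a pointwise a.e.\ slope bound to the conclusion that $\Phi_w(\R)$ is a Lipschitz graph; but a.e.\ differentiability with bounded slope does not by itself yield a Lipschitz curve. What is needed is that the boundary trace $x\mapsto\Phi_w(x)$ is locally absolutely continuous with derivative equal a.e.\ to the nontangential limit of $\Phi'_w$, so that increments can be recovered as $\Phi_w(x_2)-\Phi_w(x_1)=\int_{x_1}^{x_2}\Phi'_w(t)\,dt$; only then do the bounds $\Re\bigl(\Phi_w(x_2)-\Phi_w(x_1)\bigr)\ge \cos\|f_2\|_{L^\infty}\int_{x_1}^{x_2}e^{Kf_2(t)}\,dt>0$ and $\bigl|\Im\bigl(\Phi_w(x_2)-\Phi_w(x_1)\bigr)\bigr|\le \sin\|f_2\|_{L^\infty}\int_{x_1}^{x_2}e^{Kf_2(t)}\,dt$ give the chord condition with Lipschitz constant $\tan\|f_2\|_{L^\infty}$. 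That absolute-continuity statement is a local Hardy-space fact about $\Phi'_w$ (it uses $e^{Kf_2}\in A_2$ and control of nontangential maximal functions), and it is one of the places where Kenig's analysis does real work --- the other being the behavior at infinity and the identification of the image as the full region above the graph (here one also needs $\Re\Phi_w(x)\to\pm\infty$ as $x\to\pm\infty$, which follows since the $A_2$ weight $e^{Kf_2}$ has infinite integral over each half-line). So: correct construction, same route as the cited source, but the analytic steps you defer to Kenig, together with the absolute-continuity step you treat as automatic, constitute the actual content of the lemma.
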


 \section{Rellich's identities and new $L^2$-weighted estimates for $H$}\label{sec:rellichL2}
 
 In this section we prove the Rellich's identity versions for the  Hilbert transform stated in Theorem~\ref{thm:rellich} and new  $L^2$-weighted estimates for $H$ that follow from them. We start with some preliminaries in Section~\ref{sec:rellichL2:prem}, give the proof of Theorem~\ref{thm:rellich} in Section~\ref{sec:rellichL2:proof} and present the $L^2$-weighted estimates for $H$ in Section~\ref{sec:rellichL2:est}.

\subsection{Preliminaries}\label{sec:rellichL2:prem}
Let $\Omega,$  $\Phi$ and $\Psi$ be as in Section~\ref{sec:cm}. We can parametrize the curve $\partial\Omega$ using the conformal map  $\Phi$, that is,
$$
\partial\Omega = \{ z\in \C : z=\Phi(x) \ \text{ for some } x\in \R\}.
$$
We write $\Phi=\Phi_1+ i \Phi_2$, where $\Phi_1=\Re(\Phi)$ and $\Phi_2=\Im(\Phi),$ and analogously,  $\Psi=\Psi_1+i\Psi_2.$ For convenience, we will sometimes denote a complex number $z=z_1+i z_2$ using vector notation, i.e.,  $z=(z_1,z_2)$.
Let $\nu(z)$ be the outward unit normal vector to $\partial\Omega;$ then,
\begin{equation}\label{eq:normal}
\nu(z) = \frac{\big( \Phi_2'(\Psi(z)), - \Phi_1'(\Psi(z)) \big)}{|\Phi'(\Psi(z))|}. 
\end{equation}
Since  $\Psi$ is also a conformal map, it satisfies the Cauchy--Riemann equations:
\begin{equation} \label{eq:CR}
\partial_1 \Psi_1 = \partial_2 \Psi_2 \quad \text{and} \quad \partial_2 \Psi_1 = - \partial_1 \Psi_2,
\end{equation}
where $\partial_1$ and $\partial_2$ denote the partial derivatives with respect to the first and second variable, respectively. The following lemma will be useful in the proof of Theorem~\ref{thm:rellich}.

\begin{lemma}\label{lem:devpsi}
For almost every $x\in \R$, it holds that
\begin{align*}
(\partial_1 \Psi_1) (\Phi(x)) &= \frac{\Phi_1'(x)}{|\Phi'(x)|^2} = \Re\left(\frac{1}{\Phi'(x)}\right),\\
(\partial_2 \Psi_1) (\Phi(x)) &= \frac{\Phi_2'(x)}{|\Phi'(x)|^2}= - \Im\left(\frac{1}{\Phi'(x)}\right).
\end{align*}
\end{lemma}

\begin{proof}
Since $\Psi(\Phi(x,y))=(x,y),$ we have
$$\Psi_1(\Phi(x,y))=x, \quad \hbox{for all}~(x,y) \in \overline{\R^2_+}. $$
Differentiating with respect to $x$, we get
\begin{equation*}
(\partial_1 \Psi_1) (\Phi(x,y)) \partial_x \Phi_1(x,y) + (\partial_2 \Psi_1)(\Phi(x,y)) \partial_x \Phi_2(x,y)=1,
\end{equation*}
and similarly, differentiating with respect to $y$, we have
\begin{equation*}
(\partial_1 \Psi_1) (\Phi(x,y)) \partial_y \Phi_1(x,y) + (\partial_2 \Psi_1)(\Phi(x,y)) \partial_y \Phi_2(x,y)=0.
\end{equation*}
Furthermore, since $\Phi$ satisfies the Cauchy--Riemann equations, $\partial_x \Phi_1 = \partial_y \Phi_2$ and $\partial_y \Phi_1 = - \partial_x \Phi_2$. Set $y=0$, and denote $\Phi_1'(x)=\partial_x \Phi_1(x,0)$, $\Phi_2'(x)=\partial_x \Phi_2(x,0)$, and $\Phi'=\Phi_1'+i \Phi_2'$. Recall that $\Phi'(x)$ exists and is non-zero for almost every $x\in \R.$ Then
\begin{align} \label{eq:diffx}
(\partial_1 \Psi_1) (\Phi(x)) \Phi_1'(x) + (\partial_2 \Psi_1) (\Phi(x)) \Phi_2'(x)=1\\ \label{eq:diffy}
- (\partial_1 \Psi_1) (\Phi(x)) \Phi_2'(x) + (\partial_2 \Psi_1) (\Phi(x)) \Phi_1'(x)=0.
\end{align}
Multiplying \eqref{eq:diffx} by $\Phi_2'(x)$ and \eqref{eq:diffy} by $\Phi_1'(x)$, and adding both equations, we see that
$$
(\partial_2 \Psi_1)(\Phi(x))= \frac{\Phi_2'(x)}{|\Phi'(x)|^2}, \quad\hbox{for a.e.}~x\in \R.
$$
Finally, substituting this expression into \eqref{eq:diffy} when $\Phi'_2(x)\neq 0$, we get
$$
(\partial_1 \Psi_1)(\Phi(x))= \frac{\Phi_1'(x)}{|\Phi'(x)|^2}, \quad\hbox{for a.e.}~x\in \R.
$$
If $\Phi'_2(x)=0,$ then \eqref{eq:diffx} gives $(\partial_1 \Psi_1)(\Phi(x))=  \frac{1}{\Phi_1'(x)}=\frac{\Phi_1'(x)}{|\Phi'(x)|^2}.$
\end{proof}

\subsection{Proof of Theorem~\ref{thm:rellich}}\label{sec:rellichL2:proof}

 Let $f\in  L^2(|\Phi'|^{-1})$ be real-valued; since $\T: L^2(\partial\Omega) \to L^2(|\Phi'|^{-1})$ is invertible, there exists a unique $g\in L^2(\partial\Omega)$ such that $f=\T g.$ As explained in Section~\ref{sec:cm},  a solution $v$ of the Neumann problem \eqref{eq:neumann}  with datum $g$ can be represented by 
$v= u \circ \Psi,$ where $u=u_f$ as given  in \eqref{eq:solneumannR2} and \eqref{eq:ufderiv} holds. Then the solution $v$ satisfies  Rellich's identity \eqref{eq:rellichid}:
$$
\int_{\partial\Omega} |\nabla v|^2 \,(e\cdot\nu) \, d\sigma = 2 \int_{\partial\Omega} g \,\,(e\cdot \nabla v) \, d\sigma,
$$
for any constant vector $e=(e_1,e_2)$.

Our goal is to derive a Rellich's type identity for the function $u$. To this end, we need to compute the partial derivatives of $v$ in terms of $u$. Let $z\in \partial\Omega$, with $z=\Phi(x)$, for some $x\in \R$. Using  Lemma~\ref{lem:devpsi},  \eqref{eq:ufderiv} and \eqref{eq:CR}, we obtain
\begin{align} \label{eq:der1}
(\partial_1 v)(\Phi(x)) 
 &= -H(\T g)(x)  \Re\left(\frac{1}{\Phi'(x)}\right) -   \T g (x) \Im\left(\frac{1}{\Phi'(x)}\right),     \\ \label{eq:der2}
(\partial_2 v)(\Phi(x)) 
&=  H(\T g)(x) \Im\left(\frac{1}{\Phi'(x)}\right) - \T g (x)   \Re\left(\frac{1}{\Phi'(x)}\right) .
\end{align}
From \eqref{eq:der1} and \eqref{eq:der2}, it follows that
\begin{equation} \label{eq:mod}
|\nabla v (\Phi(x))|^2 = \big( (H( \T g)(x))^2 + (\T g(x))^2 \big) \frac{1}{|\Phi'(x)|^2}. 
\end{equation}
Making the change of variables $z= \Phi(x),$  and using \eqref{eq:mod} and \eqref{eq:normal}, we see that
\begin{align*}
\int_{\partial\Omega} |\nabla v|^2\,\, ( e \cdot \nu ) \, d\sigma 
&= \int_\R \big( (H(\T g))^2 + (\T g )^2 \big) \,\left((e_1,e_2)\cdot  \frac{( \Phi_2', - \Phi_1')}{|\Phi'|^2}\right) \, dx\\
&= -e_1  \int_\R  ( (H(\T g))^2 + (\T g)^2 \big) \im\, dx\\
&\quad  - e_2 \int_\R  ( (H(\T g))^2 + (\T g )^2 \big) \re\, dx.
\end{align*}
Similarly, we get
\begin{align*}
2 \int_{\partial\Omega} g \,\,(e\cdot \nabla v) \, d\sigma
&= -2 e_1 \int_\R \T g \, H(\T g) \re\, dx - 2 e_1 \int_\R (\T g)^2 \im \, dx\\
& \quad + 2e_2 \int_\R \T g \, H(\T g) \im \, dx - 2 e_2 \int_R (\T g)^2 \re \, dx.
\end{align*}
Therefore, Rellich's identity yields 
\begin{align*}
&2 e_1 \int_\R \T g \,H(\T g) \re\, dx - e_2 \int_\R  ( (H(\T g))^2 -  (\T g )^2 \big) \re\, dx\\
& \quad =  e_1  \int_\R  ( (H(\T g))^2 -  (\T g)^2 \big) \im\, dx + 2 e_2  \int_\R \T g \,H(\T g) \im\, dx.
\end{align*}
Recalling that $\T g=f,$  \eqref{eq:rellich} and \eqref{eq:rellichbis} immediately follow from the above identity, taking $e=(0,1)$ and $e=(1,0)$, respectively.
\qed

\begin{remark}
Observe that for $\Omega=\R^2_+$, we have $\Phi(x,y)=(x,y)$. If $y=0$, then $\Phi'(x)=1$,  and noting that $\int_{\R}fHfdx=0,$ \eqref{eq:rellich} recovers the well-known identity $\|Hf\|_{L^2}=\|f\|_{L^2}$ for the Hilbert transform in $L^2(\R)$.   
\end{remark}

\subsection{$L^2$-weighted estimates for the Hilbert transform}\label{sec:rellichL2:est}
 
 In this section we prove new $L^2$-weighted estimates for the Hilbert transform that are consequences of  Theorem~\ref{thm:rellich}. In what follows  $\Phi$ is a conformal map as  described in Section~\ref{sec:cm}.

We first state and prove Theorem~\ref{thm1}, which gives $L^2$-weighted estimates  for $H$ with constants in terms of $\varphi(\tau^2),$ where $\tau=\tan \|\A \Phi'\|_{L^\infty}$  and  
$$
\varphi(s) := \inf_{0<\varepsilon<1} \frac{\varepsilon+ s }{(1-\varepsilon)\varepsilon} = 1 + 2s + 2\sqrt{s^2+s},\quad s\geq 0. 
$$
We then prove Theorem~\ref{thm:A2est}, which follows from Theorem~\ref{thm1} and  gives  sharp $L^2$-weighted estimates for $H$ in terms of the Helson-Szeg\"o constant of the weight as discussed in Section~\ref{sec:intro}.

We end the section with Corollary~\ref{coro:Hnorm}, which gives an identity for the norm of $H$ as a bounded operator on $L^2(\re),$ and Corollary~\ref{coro:Hmono}, which gives a uniform bound for the norm of $H$ as a bounded operator on $L^2(|\Phi'|^{-1})$ when $\Phi(\R^2_+)$ is a monotone Lipschitz domain.

\begin{theorem} \label{thm1}
Let $\Phi$ be a conformal map as described in Section~\ref{sec:cm} and
 $\tau=\tan \|\A \Phi'\|_{L^\infty}$. Then for all $f\in L^2( |\Phi'|^{-1})$ real-valued, we have 
\begin{align}
\int_\mathbb R (Hf)^2 \re\, dx &\le  \varphi(\tau^2)  \int_\mathbb R f^2 \re \, dx,\label{eq:thm1_1}\\
\int_\R (Hf)^2 |\Phi'|^{-1} \, dx &\le (2+\sqrt{3}) \varphi(\tau^2)   \int_\R f^2 \,|\Phi'|^{-1} \, dx. \label{eq:thm1_2}
\end{align} 
\end{theorem}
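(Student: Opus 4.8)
The plan is to read both inequalities off the Rellich identity \eqref{eq:rellich}, the whole game being to control the cross term $\int_\R f\,Hf\im\,dx$. Writing $\Phi'=|\Phi'|e^{i\A\Phi'}$ and setting $\theta_0:=\|\A\Phi'\|_{L^\infty}$ (so $\tau=\tan\theta_0$ and $\theta_0<\pi/2$), one has $1/\Phi'=|\Phi'|^{-1}e^{-i\A\Phi'}$, whence the pointwise identities $\re=|\Phi'|^{-1}\cos(\A\Phi')$ and $\im=-|\Phi'|^{-1}\sin(\A\Phi')$. In particular $\re>0$ a.e., and since $|\A\Phi'|\le\theta_0$ pointwise we obtain the three elementary bounds $|\im|\le\tau\,\re$, $\re\le|\Phi'|^{-1}$ and $|\Phi'|^{-1}\le(1+\tau^2)^{1/2}\re$. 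These are the only facts about $\Phi$ I will use beyond \eqref{eq:rellich}.

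For \eqref{eq:thm1_1} I would start from \eqref{eq:rellich} and estimate, using $|\im|\le\tau\,\re$,
\[
-2\int_\R f\,Hf\im\,dx\le 2\tau\int_\R |f|\,|Hf|\,\re\,dx .
\]
Applying Young's inequality $2|f|\,|Hf|\le\lambda^{-1}f^2+\lambda (Hf)^2$ in the positive weight $\re\,dx$, for a parameter $\lambda$ with $\tau\lambda<1$, and rearranging (the integral $\int(Hf)^2\re\,dx$ is finite because $\re\sim|\Phi'|^{-1}\in A_2$) gives
\[
\int_\R (Hf)^2\re\,dx\le\frac{1+\tau\lambda^{-1}}{1-\tau\lambda}\int_\R f^2\re\,dx .
\]
Setting $\varepsilon=\tau\lambda\in(0,1)$ turns the constant into $\frac{\varepsilon+\tau^2}{\varepsilon(1-\varepsilon)}$, and taking the infimum over $\varepsilon$ reproduces exactly the definition of $\varphi(\tau^2)$, yielding \eqref{eq:thm1_1}. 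Equivalently, one may bound the cross term by Cauchy--Schwarz in the measure $\re\,dx$ and solve the resulting quadratic.

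For \eqref{eq:thm1_2} the difficulty, and the point I expect to be genuinely delicate, is that $|\Phi'|^{-1}=|1/\Phi'|$ is \emph{not} a real part of $1/\Phi'$, so no Rellich identity is attached to it directly; the naive route $|\Phi'|^{-1}\le(1+\tau^2)^{1/2}\re$ combined with \eqref{eq:thm1_1} only produces the constant $(1+\tau^2)^{1/2}\varphi(\tau^2)$, which is of the wrong order in $\tau$ (cubic rather than quadratic as $\tau\to\infty$). To keep the correct rate I would not discard the cross term but work throughout in the measure $d\mu:=|\Phi'|^{-1}\,dx$. Writing $\theta=\A\Phi'$ and using $\re\,dx=\cos\theta\,d\mu$ and $\im\,dx=-\sin\theta\,d\mu$, identity \eqref{eq:rellich} becomes
\[
\int_\R (Hf)^2\cos\theta\,d\mu=\int_\R f^2\cos\theta\,d\mu+2\int_\R f\,Hf\sin\theta\,d\mu .
\]
Bounding $\cos\theta\ge\cos\theta_0$ on the left, $\cos\theta\le1$ in the first term on the right, and $|\sin\theta|\le\sin\theta_0$ together with Cauchy--Schwarz in $d\mu$ on the cross term, and abbreviating $I=\int_\R (Hf)^2\,d\mu$ and $J=\int_\R f^2\,d\mu$, I obtain the scalar inequality
\[
\cos\theta_0\,I\le J+2\sin\theta_0\sqrt{IJ}.
\]
Solving this quadratic in $\sqrt{I/J}$ gives $I\le\bigl(\tfrac{\sin\theta_0+\sqrt{\sin^2\theta_0+\cos\theta_0}}{\cos\theta_0}\bigr)^2 J$.

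It then remains only to compare this constant with the one in the statement. Recalling $\varphi(\tau^2)=\bigl(\tfrac{1+\sin\theta_0}{\cos\theta_0}\bigr)^2$ and that $2+\sqrt3=\varphi(\tfrac12)$, the proof is finished by the elementary one-variable inequality
\[
\sin\theta_0+\sqrt{\sin^2\theta_0+\cos\theta_0}\le\sqrt{2+\sqrt3}\,\bigl(1+\sin\theta_0\bigr),\qquad 0\le\theta_0<\tfrac{\pi}{2},
\]
which, after squaring and dividing by $\cos^2\theta_0$, is precisely \eqref{eq:thm1_2}. The main obstacle is thus not any single computation but the structural observation that the passage from the $\re$-weighted identity to the $|\Phi'|^{-1}$-weighted inequality must be made through the exact cross-term cancellation in \eqref{eq:rellich} (i.e.\ in the measure $d\mu$), rather than through a crude pointwise comparison of the two weights, in order not to lose the $\varphi(\tau^2)$ rate.
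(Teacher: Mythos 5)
Your proof of \eqref{eq:thm1_1} is correct and essentially the paper's own argument: both bound the cross term in \eqref{eq:rellich} by $|\im|\le\tau\,\re$, apply a parametrized Young/Cauchy--Schwarz inequality, rearrange, and take the infimum in the parameter, which is precisely the definition of $\varphi(\tau^2)$. (Your remark that the rearrangement requires a priori finiteness of $\int_\R (Hf)^2\,\re\,dx$, which follows from $|\Phi'|^{-1}\in A_2$, is a point the paper leaves implicit.)

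For \eqref{eq:thm1_2} your argument is correct but takes a genuinely different route from the paper's. The paper bootstraps from \eqref{eq:thm1_1}: it splits $|\Phi'|^{-1}\le\re+|\im|\le(1+\tau)\,\re$, applies \eqref{eq:thm1_1} to one piece, invokes \eqref{eq:rellich} a second time with another parametrized Cauchy--Schwarz to absorb the leftover term $\tau\int_\R(Hf)^2\,\re\,dx$, rearranges, optimizes a second time, and finally uses $(1+\tau)\tau/\varphi(\tau^2)\le\tfrac{1}{2}$ together with $\varphi(1/2)=2+\sqrt{3}$. You instead rewrite \eqref{eq:rellich} once and for all in the single measure $d\mu=|\Phi'|^{-1}dx$ with $\theta=\A\Phi'$, reduce to the scalar quadratic inequality $\cos\theta_0\,I\le J+2\sin\theta_0\sqrt{IJ}$ for $I=\int_\R(Hf)^2\,d\mu$ and $J=\int_\R f^2\,d\mu$, and solve it. This uses \eqref{eq:rellich} only once, needs no appeal to \eqref{eq:thm1_1} and no parameter optimization; your identity $\varphi(\tau^2)=\left(\frac{1+\sin\theta_0}{\cos\theta_0}\right)^2$ is correct, and your constant is in fact sharper, namely $\left(\frac{\sin\theta_0+\sqrt{\sin^2\theta_0+\cos\theta_0}}{1+\sin\theta_0}\right)^2\varphi(\tau^2)\le\tfrac{9}{4}\,\varphi(\tau^2)<(2+\sqrt{3})\,\varphi(\tau^2)$. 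What the paper's route buys is that the stated constant $(2+\sqrt{3})\varphi(\tau^2)$ drops out mechanically from the $\varphi$-calculus; what yours buys is a shorter, self-contained argument with a better constant and a transparent explanation of why one must keep the exact cross-term structure of \eqref{eq:rellich} (rather than compare weights pointwise) to preserve the quadratic-in-$\tau$ rate.

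The single loose end is the closing ``elementary one-variable inequality'', which you assert without proof. It does hold, with room to spare: since $\cos\theta_0\le 1$ and $\sqrt{1+\sin^2\theta_0}\le 1+\sin\theta_0$, one has
\[
\sin\theta_0+\sqrt{\sin^2\theta_0+\cos\theta_0}\;\le\;1+2\sin\theta_0\;\le\;\tfrac{3}{2}\left(1+\sin\theta_0\right),
\]
and $\tfrac{3}{2}<\sqrt{2+\sqrt{3}}$. With this line included, your proof is complete.
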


\begin{proof}  \underline{Proof of \eqref{eq:thm1_1}:} We first estimate  the last term on the right-hand side of  \eqref{eq:rellich}. Since $\im=\re \tan(\A\frac{1}{\Phi'})=- \re \tan(\A\Phi')$ and $\re=\frac{\Re(\Phi')}{|\Phi'|^2}\ge 0,$ it follows that 
$$
\int_\R f  Hf  \left|\im\right|\,  dx \le \tau \int_ \R f   Hf \re\, dx. 
$$
Therefore, using the Cauchy-Schwarz inequality and \eqref{eq:rellich}, we get
$$
\int_\mathbb R (Hf)^2  \re\, dx \le  \left(1+ \frac{\tau^2}{\e} \right) \int_\mathbb R f^2 \re\, dx +  \e  \int_\R  (Hf)^2    \re\, dx.
$$
Equivalently,
$$
\int_\mathbb R (Hf)^2  \re\, dx \le   \frac{\e+\tau^2}{(1-\e)\e}  \int_\mathbb R f^2 \re\, dx, $$
and the result follows by taking the infimum in $0<\varepsilon<1$.
\medskip

\noindent

\underline{Proof of \eqref{eq:thm1_2}:} Using \eqref{eq:thm1_1} and again that $\im=- \re \tan(\A\Phi')$ and $\re\ge 0,$ we have
\begin{align*}
\displaystyle\int_\R (Hf)^2 |\Phi'|^{-1} \, dx 
 & \leq  \varphi(\tau^2)  \int_\mathbb R f^2 |\Phi'|^{-1} \, dx + \tau \int_\mathbb R (Hf)^2 \re\, dx.
\end{align*}
We next use \eqref{eq:rellich}  and the Cauchy-Schwarz inequality to control the second term on the right-hand side: 
\begin{align*}
 \tau \int_\mathbb R (Hf)^2 \re\, dx
 & \leq \tau  \int_\mathbb R f^2 \re\, dx + \frac{\tau}{\e}  \int_\mathbb R f^2 |\im| \, dx\\
 & \quad + \e \tau  \int_\mathbb R (Hf)^2 |\im| \, dx\\
 & \leq \left( \tau + \frac{\tau}{\e}\right)  \int_\mathbb R f^2 |\Phi'|^{-1}\, dx + \e \tau  \int_\mathbb R (Hf)^2 |\Phi'|^{-1} \, dx\\
 & \leq \left(\frac{1+\tau}{\e}\right)  \int_\mathbb R f^2 |\Phi'|^{-1}\, dx + \e \tau  \int_\mathbb R (Hf)^2 |\Phi'|^{-1} \, dx,
\end{align*}
where  $\e>0$ is such that $0<\e \tau<1.$  
Combining both estimates,  we see that
\begin{align*}
(1- \e \tau) \displaystyle\int_\R (Hf)^2 |\Phi'|^{-1} \, dx
\leq   \left( \varphi(\tau^2) + \frac{1+\tau}{\e}\right)  \int_\mathbb R f^2 |\Phi'|^{-1} \, dx.
\end{align*}
Setting $\delta= \e \tau,$ we obtain
$$
\int_\R (Hf)^2 |\Phi'|^{-1} \, dx \leq   \varphi(\tau^2)  \frac{\delta+ (1+\tau)\tau/ \varphi(\tau^2) }{(1-\delta)\delta}  \int_\mathbb R f^2 |\Phi'|^{-1} \, dx.
$$
Taking the infimum over all $0<\delta<1$ leads to 
$$
\int_\R (Hf)^2 |\Phi'|^{-1} \, dx \leq   \varphi(\tau^2) \varphi((1+\tau)\tau/\varphi(\tau^2))  \int_\mathbb R f^2 |\Phi'|^{-1} \, dx.
$$
Since  $(1+\tau)\tau/\varphi(\tau^2)\le 1/2,$ we have  $\varphi((1+\tau)\tau/\varphi(\tau^2))\le \varphi(1/2)=2+\sqrt{3};$ therefore, it follows that 
$$
\int_\R (Hf)^2 |\Phi'|^{-1} \, dx \leq  (2+\sqrt{3})  \varphi(\tau^2)  \int_\mathbb R f^2 |\Phi'|^{-1} \, dx,
$$
as desired.
\end{proof}

We next present the proof of Theorem~\ref{thm:A2est}.

\begin{proof}[Proof of Theorem~\ref{thm:A2est}]  Since $w\in A_2,$ we have  $w^{-1}= e^{-f_1 - Kf_2}$ with $(f_1, f_2)\in \D_{w}. $ Let $\Phi=\Phi_{w^{-1}}$ be as given in Theorem~\ref{thm:kenig}; then $|\Phi'|=  e^{-Kf_2}$  and $\A\Phi'=f_2.$

By \eqref{eq:thm1_2} and using that $\varphi(s) \sim 1+s$ for $s\ge 0,$ we have 
\begin{align*}
\int_\mathbb R (Hf)^2 w \,dx&= \int_\R (Hf)^2 e^{f_1 +Kf_2} \,dx \\
& \le e^{\sup f_1} \int_\mathbb R (Hf)^2 |\Phi'|^{-1} \,dx \\
& \lesssim e^{\sup f_1}\varphi(\tan^2\|f_2\|_{L^\infty})   \int_\R f^2  |\Phi'|^{-1} \, dx\\
& \lesssim e^{\osc f_1} (1+\tan^2\|f_2\|_{L^\infty}) \int_\R f^2   w  \,  dx.
\end{align*}
Taking infimum over all pairs $(f_1, f_2)\in \D_{w},$ we obtain \eqref{eq:A2est}. 

The fact that the dependence of $[w]_{A_2(\hs)}$ in \eqref{eq:A2est} is sharp follows from Remark~\ref{re:sharp}.
\end{proof}

The next corollary is a direct consequence of \eqref{eq:rellich}.

\begin{corollary}\label{coro:Hnorm} If  $\Phi$ is a conformal map as described in Section~\ref{sec:cm}, then
\begin{equation}\label{eq:Hnorm}
||H||^2_{L^2\left(\re\right)\to L^2\left(\re\right)}= \mathop{\sup_{||f||_{{L^2\left(\re\right)}=1}}}_{f\textnormal{ real-valued}}
\left| 1- 2\int_\mathbb R f Hf \im dx\right|
\end{equation}
\end{corollary}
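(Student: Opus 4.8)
The plan is to read \eqref{eq:Hnorm} off directly from the Rellich identity \eqref{eq:rellich}. First I would normalize: for a real-valued $f$ with $\|f\|_{L^2(\re)}=1$, i.e.\ $\int_\R f^2\re\,dx=1$, identity \eqref{eq:rellich} reads
$$
\int_\R (Hf)^2\re\,dx = 1 - 2\int_\R f\,Hf\,\im\,dx.
$$
Since $f$ is real-valued, so is $Hf$, and because $\re\ge 0$ the left-hand side is exactly $\|Hf\|_{L^2(\re)}^2\ge 0$. Hence $1-2\int_\R f\,Hf\,\im\,dx$ is automatically nonnegative on unit-norm real-valued $f$, so the absolute value in \eqref{eq:Hnorm} is redundant there, and taking the supremum over such $f$ turns the right-hand side of \eqref{eq:Hnorm} into $\sup\|Hf\|_{L^2(\re)}^2$.

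The one remaining point is that this supremum, taken only over real-valued $f$, already computes the full operator norm of $H$ on $L^2(\re)$. I would first record that \eqref{eq:rellich}, stated for $f\in L^2(|\Phi'|^{-1})$, applies to every $f\in L^2(\re)$: indeed $\re=\Re(\Phi')/|\Phi'|^2=\cos(\A\Phi')/|\Phi'|$ with $\|\A\Phi'\|_{L^\infty}<\pi/2$, so $\cos(\A\Phi')$ lies between a positive constant and $1$ and thus $\re\sim|\Phi'|^{-1}$, making the two weighted spaces coincide with equivalent norms. To pass from real- to complex-valued $f$, I would use that $H$ commutes with complex conjugation, so for $f=f_1+if_2$ with $f_1,f_2$ real one has $Hf=Hf_1+iHf_2$ and, the weight $\re$ being real,
$$
\|Hf\|_{L^2(\re)}^2=\|Hf_1\|_{L^2(\re)}^2+\|Hf_2\|_{L^2(\re)}^2,\qquad
\|f\|_{L^2(\re)}^2=\|f_1\|_{L^2(\re)}^2+\|f_2\|_{L^2(\re)}^2.
$$
The complex ratio $\|Hf\|_{L^2(\re)}^2/\|f\|_{L^2(\re)}^2$ is then a weighted mediant of the two real ratios $\|Hf_j\|_{L^2(\re)}^2/\|f_j\|_{L^2(\re)}^2$ and so never exceeds their maximum; since real-valued functions form a subset of $L^2(\re)$, the two suprema agree. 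Combining this with the first paragraph yields \eqref{eq:Hnorm}.

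The only genuine subtlety I anticipate is this last reduction to real-valued functions, because the operator norm is a priori taken over complex-valued $L^2(\re)$ while \eqref{eq:rellich} is available only for real $f$; everything else is an immediate algebraic consequence of \eqref{eq:rellich} together with the nonnegativity of $\|Hf\|_{L^2(\re)}^2$.
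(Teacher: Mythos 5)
Your proposal is correct and follows exactly the route the paper intends: the paper offers no written proof beyond calling the corollary ``a direct consequence of \eqref{eq:rellich}'', and your argument is precisely that consequence spelled out --- normalize $\|f\|_{L^2\left(\re\right)}=1$, read off $\|Hf\|^2_{L^2\left(\re\right)}=1-2\int_\R f\,Hf \im dx\ge 0$ so the absolute value is harmless, and take suprema. Your two supplementary checks (that $\re\sim|\Phi'|^{-1}$ since $\|\A\Phi'\|_{L^\infty}\le\arctan k<\pi/2$, so \eqref{eq:rellich} indeed applies to all of $L^2\left(\re\right)$, and that the supremum over real-valued $f$ already gives the full operator norm via the mediant inequality) are exactly the details the paper leaves implicit, and both are sound.
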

It is worth noting that for the integrals on the right hand side of \eqref{eq:Hnorm}, only the values $Hf(x)$ for $x$ in the support of $f$ are needed.

\medskip

A {\it{monotone graph Lipschitz domain}}  is a graph Lipschitz domain $\Omega$ as described in Section~\ref{sec:cm}  for which the function $\gamma$ is monotone. Note that if $\Phi$ is a conformal map associated to $\Omega$ as given in Section~\ref{sec:cm}, then $\Omega $ is a monotone graph Lipschitz domain  if and only if $\imm\ge 0$ almost everywhere or $\imm\le 0$ almost everywhere. 
We have the following result for conformal maps associated to monotone graph Lipschitz domains:

\begin{corollary}\label{coro:Hmono} Let $\Phi$ be a conformal map as described in Section~\ref{sec:cm} such that $\Phi(\R^2_+)$ is a monotone graph Lipschitz domain. If $f\in L^2(|\Phi'|^{-1})$ is real-valued, then
 \begin{align}\label{eq:Hmono}
||Hf||_{L^2(|\Phi'|^{-1})}\le \sqrt{\sqrt{2}\,\left(1+2\sqrt{2}+2\sqrt{2+\sqrt{2}}\right)} \,||f||_{L^2(|\Phi'|^{-1})}.
\end{align}
\end{corollary}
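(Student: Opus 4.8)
The plan is to deduce the bound from the two Rellich identities of Theorem~\ref{thm:rellich}, using in an essential way that for a monotone graph Lipschitz domain the function $\im$ does not change sign. First I would record two pointwise facts valid a.e.: $\re\ge 0$ (as already observed in the proof of Theorem~\ref{thm1}) and $|\Phi'|^{-1}=\sqrt{\re^2+\im^2}$, the latter because $1/\Phi'=\re+i\im$. By the characterization of monotone domains recalled before the statement---namely that $\imm$ has a constant sign a.e.---and since $\im=-\imm\,|\Phi'|^{-2}$, the function $\im$ also has a constant sign a.e.; thus either $\im\ge0$ a.e.\ or $\im\le0$ a.e., and I treat the case $\im\ge0$, the other being entirely symmetric. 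For $a,b\ge0$ one has the elementary inequalities $\sqrt{a^2+b^2}\le a+b\le\sqrt2\,\sqrt{a^2+b^2}$ and $|a-b|\le\sqrt2\,\sqrt{a^2+b^2}$; applied with $a=\re$ and $b=\im$ these provide all the pointwise comparisons I will need.

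Next I would pass from the weight $|\Phi'|^{-1}$ to the combination $\re+\im$. Since $|\Phi'|^{-1}\le\re+\im$, I get $\int_\R(Hf)^2|\Phi'|^{-1}\,dx\le\int_\R(Hf)^2(\re+\im)\,dx$, and adding \eqref{eq:rellich} and \eqref{eq:rellichbis} gives
\begin{equation*}
\int_\R(Hf)^2(\re+\im)\,dx=\int_\R f^2(\re+\im)\,dx+2\int_\R fHf(\re-\im)\,dx.
\end{equation*}
The constant sign of $\im$ is exactly what makes this step work: it lets me realize $|\im|$ as $\im$ and hence use a single sign-definite combination (the sum) of the two identities, instead of a weight $\re+|\im|$ that is not itself of Rellich form.

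Writing $X=\int_\R(Hf)^2|\Phi'|^{-1}\,dx$ and $Y=\int_\R f^2|\Phi'|^{-1}\,dx$, the pointwise bounds yield $\int_\R f^2(\re+\im)\,dx\le\sqrt2\,Y$ and, via $|\re-\im|\le\sqrt2\,|\Phi'|^{-1}$ together with the Cauchy--Schwarz inequality, $\big|2\int_\R fHf(\re-\im)\,dx\big|\le2\sqrt2\int_\R|f|\,|Hf|\,|\Phi'|^{-1}\,dx\le2\sqrt2\,\sqrt{XY}$. Hence $X\le\sqrt2\,Y+2\sqrt2\,\sqrt{XY}$. Crucially $X<\infty$ by \eqref{eq:thm1_2}, so I may apply the arithmetic--geometric mean inequality $2\sqrt2\,\sqrt{XY}=2\sqrt{2XY}\le\varepsilon X+\tfrac2\varepsilon Y$ and absorb: for $0<\varepsilon<1$, $(1-\varepsilon)X\le\big(\sqrt2+\tfrac2\varepsilon\big)Y$, that is, $X\le\sqrt2\,\frac{\varepsilon+\sqrt2}{(1-\varepsilon)\varepsilon}\,Y$. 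Taking the infimum over $\varepsilon$ and recognizing the definition of $\varphi$ gives $X\le\sqrt2\,\varphi(\sqrt2)\,Y$, and substituting $\varphi(\sqrt2)=1+2\sqrt2+2\sqrt{2+\sqrt2}$ and taking square roots produces exactly \eqref{eq:Hmono}.

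The computation is short once the ingredients are assembled, so the main obstacle is conceptual rather than technical: recognizing that monotonicity is precisely the hypothesis under which a fixed linear combination of \eqref{eq:rellich} and \eqref{eq:rellichbis} dominates $\int_\R(Hf)^2|\Phi'|^{-1}\,dx$ with a $\tau$-independent constant, whereas for a general domain one only obtains the $\tau$-dependent estimate \eqref{eq:thm1_2}. I would also note in passing that, since $\re,\im\ge0$ forces the sharper bound $|\re-\im|\le|\Phi'|^{-1}$, the cross term could instead be estimated by $2\sqrt{XY}$, improving the constant; the stated value follows from the cruder $\sqrt2$ estimate.
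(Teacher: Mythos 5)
Your proof is correct and follows essentially the same route as the paper's: monotonicity gives $\im$ a constant sign, you add (or subtract) \eqref{eq:rellich} and \eqref{eq:rellichbis} to work with the weight $\re+\im$, compare it pointwise with $|\Phi'|^{-1}$, and close the estimate by Cauchy--Schwarz, absorption, and the infimum defining $\varphi(\sqrt2)$. Your two extra touches---explicitly invoking \eqref{eq:thm1_2} to guarantee $X<\infty$ before absorbing, and noting that $|\re-\im|\le|\Phi'|^{-1}$ would actually improve the constant---are both valid refinements of details the paper leaves implicit or bounds crudely.
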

\begin{proof}
Since $\Phi(\R^2_+)$ is a  monotone graph Lipschitz domain, then $\im\ge 0$ almost everywhere or $\im\le 0$ almost everywhere. 

Assume first that $\im\ge 0$ almost everywhere. Adding \eqref{eq:rellich} and \eqref{eq:rellichbis}, we have
\begin{align*}
\int_\mathbb R (Hf)^2 &|\Phi|^{-1}dx\le
\int_\mathbb R (Hf)^2 \left(\re+\im\right) dx\\&=\int_\mathbb R f^2 \left(\re+\im\right)  dx 
+2 \int_\mathbb R f \,Hf \left(\re-\im\right) dx\\
&\le \sqrt{2} \int_\mathbb R f^2 |\Phi'|^{-1} dx
+2 \sqrt{2}\int_\mathbb R |f \,Hf| |\Phi'|^{-1} dx.
\end{align*}
Letting $0<\varepsilon<1/\sqrt{2}$ and applying the Cauchy-Schwarz inequality, the second term in the last line is controlled by 
\begin{align*}
 2 \sqrt{2}  \int_\mathbb R |f \,Hf| |\Phi'|^{-1}dx
\le \frac{\sqrt{2}}{\varepsilon} \int_\mathbb R f^2 |\Phi'|^{-1}dx +\sqrt{2}\,\varepsilon \int_\mathbb R (Hf)^2 |\Phi'|^{-1}dx.
\end{align*}
Setting $\delta=\sqrt{2}\,\varepsilon,$ we then obtain 
\begin{align*}
\int_\mathbb R (Hf)^2 |\Phi|^{-1}dx\le \sqrt{2}\frac{1+\frac{1}{\varepsilon}}{1-\sqrt{2}\varepsilon} \int_\mathbb R f^2 |\Phi'|^{-1}dx= \sqrt{2}\frac{\delta+\sqrt{2}}{\delta(1-\delta)} \int_\mathbb R f^2 |\Phi'|^{-1}dx.
\end{align*}
Taking infimum over $0<\delta<1,$ it follows that
\begin{align*}
\int_\mathbb R (Hf)^2 |\Phi|^{-1}dx&\le \sqrt{2}\,\varphi(\sqrt{2}) \int_\mathbb R f^2 |\Phi'|^{-1}dx\\
&=\sqrt{2}\,\left(1+2\sqrt{2}+2\sqrt{2+\sqrt{2}}\right) \int_\mathbb R f^2 |\Phi'|^{-1}dx.
\end{align*}

The case $\im\le 0$ follows analogously by subtracting \eqref{eq:rellich} and \eqref{eq:rellichbis}.

\end{proof}

\section{Hilbert transform identities with power weights}\label{sec:power}

In this section we investigate further the identities \eqref{eq:rellich} and \eqref{eq:rellichbis} when $\Phi(\R^2_+)$ is a cone, and obtain Hilbert transform identities with power weights. We will consider two types of cones: Symmetric cones about the imaginary axis and monotone cones (i.e. cones that are monotone graph Lipschitz domains). 

\subsection{Symmetric cones}\label{sec:symcones} Let $\Omega$ be a cone with aperture $\alpha \pi$, with $\alpha\in (0,2)$, which is symmetric about the imaginary axis (see Figure~\ref{fig1}). Consider the conformal map $\Phi : \R^2_+ \to \Omega$ such that
\begin{equation}\label{map1}
\Phi(z) = e^{i \tfrac{(1-\alpha)}{2} \pi} z^\alpha = i e^{-i \tfrac{\alpha}{2} \pi} e^{\alpha ( \log |z| + i \A(z) )},
\end{equation}
where we chose the branch cut $\{ i y : y \leq 0\}$, so that $\Phi$ is analytic on $\R^2_+$. 

\bigskip

\medskip

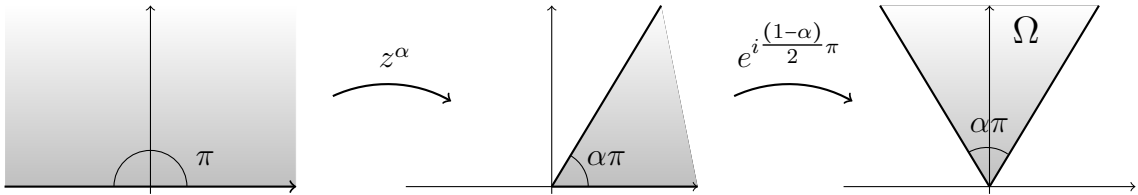
\begin{figure}[h!]
 \begin{center}
\begin{tikzpicture}[scale=.48]
\filldraw[draw=white,bottom color=lightgray, top color=white] (-24, 5)-- (-24,0)--(-16,0)--(-16,5) ; 
\filldraw[draw=white,bottom color=lightgray, top color=white] (-6, 5)-- (-9,0)--(-5,0) ; 
\filldraw[draw=white,bottom color=lightgray, top color=white] (0, 5) -- (3,0)--(6, 5) ; 
\draw [thick, ->] (-24,0) -- (-16,0); 
\draw [->] (-20,-.2) -- (-20,5); 
\draw (-19,0) arc (0:180:1);
\node[above] at (-18.5,0.3) {{$\pi $}};
\draw[thick,->] (-15,2.5) arc (115:60:100pt);
\node [above] at (-13.3,3) {{$z^\alpha$}};
\draw [->] (-13,0) -- (-5,0); 
\draw [->] (-9,-.2) -- (-9,5); 
\draw [ thick] (-9,0) -- (-6,5); 
\draw [ thick] (-9,0) -- (-5,0); 
\draw (-8,0) arc (0:60:1); 
\node[above] at (-7.5,0.3) {{$\alpha \pi $}}; 
\draw[thick,->] (-4,2.5) arc (115:60:100pt);
\node [above] at (-2.5,3) {{$e^{i \tfrac{(1-\alpha)}{2} \pi} $}};
\draw [ ->] (-1,0) -- (7,0); 
\draw [ ->] (3,-.2) -- (3,5);  
\draw [ thick] (6, 5) -- (3,0);  
\draw [ thick] (0, 5) -- (3,0); 
\draw (3.5,0.9) arc (55:120:1); 
\node[above] at (3,1.2) {{$\alpha \pi $}}; 
\node[below] at (4,5) {{\large$\Omega$}}; 
\end{tikzpicture}
\caption{Symmetric cone with aperture $\alpha \pi$.}
\label{fig1}
\end{center}
\end{figure}

We have the following result.

\begin{theorem} \label{thm:cone1}
Fix $\beta \in (-1,1)$. For any $f\in L^2(|x|^\beta)$ real-valued, it holds that
\begin{equation*}
\int_\R (Hf)^2 |x|^{\beta} \, dx = \int_\R f^2 |x|^{\beta}\, dx + 2 \cot\left( \frac{(1-\beta)\pi}{2}\right) \int_\R f Hf \sgn(x) |x|^{\beta}\, dx.
\end{equation*}
\end{theorem}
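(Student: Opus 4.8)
The plan is to obtain this identity as a direct specialization of Theorem~\ref{thm:rellich} to the conformal map $\Phi$ of \eqref{map1}, choosing the aperture so that the weight $|\Phi'|^{-1}$ matches $|x|^\beta$. Given $\beta\in(-1,1)$, I would set $\alpha=1-\beta\in(0,2)$. The symmetric cone $\Omega=\Phi(\R^2_+)$ is then a graph Lipschitz domain whose boundary is the graph of $\gamma(z_1)=\cot(\alpha\pi/2)\,|z_1|$, so $\Phi$ is a conformal map of the type described in Section~\ref{sec:cm} and \eqref{eq:rellich} is available.

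The key computation is the boundary behavior of $\Phi'$. From \eqref{map1} one has $\Phi'(z)=\alpha\,e^{i(1-\alpha)\pi/2}z^{\alpha-1}$, and evaluating along $\R$ with the branch cut $\{iy:y\le 0\}$ (so that $\A(x)=0$ for $x>0$ and $\A(x)=\pi$ for $x<0$) gives
$$
\Phi'(x)=\alpha\,|x|^{\alpha-1}\,e^{\,i\,\sgn(x)\,(1-\alpha)\pi/2},\qquad x\in\R .
$$
Hence $|\Phi'(x)|=\alpha|x|^{\alpha-1}$, so $|\Phi'|^{-1}\sim |x|^{1-\alpha}=|x|^\beta$ and $L^2(|\Phi'|^{-1})=L^2(|x|^\beta)$, which matches the hypothesis on $f$. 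Inverting and taking real and imaginary parts, and using $1-\alpha=\beta$, I would obtain for a.e. $x\in\R$
$$
\rex=\frac{|x|^{\beta}}{\alpha}\cos\Big(\frac{\beta\pi}{2}\Big),\qquad
\imx=-\sgn(x)\,\frac{|x|^{\beta}}{\alpha}\sin\Big(\frac{\beta\pi}{2}\Big).
$$

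Substituting these into \eqref{eq:rellich} and cancelling the common factor $\tfrac1\alpha\cos(\beta\pi/2)$ (nonzero since $\beta\in(-1,1)$) yields
$$
\int_\R (Hf)^2|x|^\beta\,dx=\int_\R f^2|x|^\beta\,dx+2\tan\Big(\frac{\beta\pi}{2}\Big)\int_\R f\,Hf\,\sgn(x)\,|x|^\beta\,dx,
$$
and the stated form follows from $\tan(\beta\pi/2)=\cot((1-\beta)\pi/2)$. The steps requiring care are the branch bookkeeping that produces the factor $e^{i\sgn(x)(1-\alpha)\pi/2}$ on the two half-lines — and hence the $\sgn(x)$ in the cross term — together with the verification that $\Omega$ is genuinely a graph Lipschitz domain with $\|\A\Phi'\|_{L^\infty}=|1-\alpha|\pi/2<\pi/2$, so that the framework of Section~\ref{sec:cm} and Theorem~\ref{thm:rellich} is legitimately applicable.
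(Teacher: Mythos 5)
Your proposal is correct and follows essentially the same route as the paper: specialize Theorem~\ref{thm:rellich} to the symmetric-cone map \eqref{map1} with $\alpha=1-\beta$, compute $\Phi'$ on each half-line (your polar form $\alpha|x|^{\alpha-1}e^{i\sgn(x)(1-\alpha)\pi/2}$ agrees with the paper's $\sin(\alpha\pi/2)$, $\cos(\alpha\pi/2)$ components), and substitute $\rex$, $\imx$ into \eqref{eq:rellich}. The only cosmetic difference is that you arrive at $\tan(\beta\pi/2)$ and convert, while the paper obtains $\cot(\alpha\pi/2)=\cot((1-\beta)\pi/2)$ directly.
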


\begin{remark}
Note that $\cot\Big(\tfrac{(1-\beta)\pi}{2}\Big)$ blows up when $\beta=-1$ and $\beta=1$. This is consistent with the well-known fact that $|x|^\beta \in A_2$ if and only if $\beta\in (-1,1)$. 
\end{remark}

\begin{proof}
Fix $\beta \in (-1,1)$. Let $\alpha= 1-\beta \in (0,2)$, and consider the conformal map $\Phi$ given in \eqref{map1}.
We need to compute $\re$ and $\im$ on $\R$. If $x>0$, then $\Phi(x) =  i e^{-i \tfrac{\alpha}{2} \pi} x^\alpha.$
Differentiating with respect to $x$, we get
$$
\Phi'(x) = \alpha \sin\left(\frac{\alpha\pi}{2}\right) x^{\alpha-1} + i \alpha \cos\left(\frac{\alpha\pi}{2}\right) x^{\alpha-1}.
$$
Similarly, if $x<0$, then $\Phi(x) =  i e^{i \tfrac{\alpha}{2} \pi} (-x)^\alpha,$
and thus,
$$
\Phi'(x) = \alpha \sin\left(\frac{\alpha\pi}{2}\right) (-x)^{\alpha-1} - i \alpha \cos\left(\frac{\alpha\pi}{2}\right) (-x)^{\alpha-1}.
$$
Note that $|\Phi'(x)|^2 = \alpha^2 |x|^{2(\alpha-1)}.$ Therefore,
\begin{align*}
\re = \alpha^{-1} \sin\left(\frac{\alpha\pi}{2}\right)  |x|^{1-\alpha}\quad \hbox{and}\quad
\im = - \alpha^{-1} \cos\left(\frac{\alpha\pi}{2}\right) \sgn(x)  |x|^{1-\alpha}. 
\end{align*}
Substituting these expressions into \eqref{eq:rellich}, and using that $\beta=1-\alpha$, we obtain the result.
\end{proof}

\begin{corollary}
If $\beta\in (-1,1)$, it holds that
$$
\|H\|_{L^2(|x|^\beta)\to L^2(|x|^\beta)}^2  \ge 1 - \frac {2 (1+\beta)}\pi  \cot\left( \frac{(1-\beta)\pi}{2}\right) \int_1^\infty |x|^{-2-\beta}  \log  {|1-x|}\, dx.
$$
\end{corollary}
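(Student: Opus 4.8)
The plan is to combine the identity of Theorem~\ref{thm:cone1} with the variational description of the operator norm. For every nonzero real-valued $f\in L^2(|x|^\beta)$ that identity gives
\[
\frac{\int_\R (Hf)^2|x|^\beta\,dx}{\int_\R f^2|x|^\beta\,dx}=1+2\cot\!\Big(\tfrac{(1-\beta)\pi}{2}\Big)\,\frac{\int_\R f\,Hf\,\sgn(x)\,|x|^\beta\,dx}{\int_\R f^2|x|^\beta\,dx},
\]
and since $\|H\|_{L^2(|x|^\beta)\to L^2(|x|^\beta)}^2$ is the supremum of the left-hand side over all such $f$, any single admissible choice produces a lower bound. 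I would test against $f=\chi_{[0,1]}$, which belongs to $L^2(|x|^\beta)$ exactly because $\beta>-1$.

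With this choice both ingredients are explicit. On the one hand $\int_\R f^2|x|^\beta\,dx=\int_0^1 x^\beta\,dx=\tfrac1{1+\beta}$. On the other hand, from the classical formula $H\chi_{[0,1]}(x)=\tfrac1\pi\log\big|\tfrac{x}{x-1}\big|$ and the fact that $\sgn(x)=1$ on the support $[0,1]$, the cross term reduces to
\[
\int_\R f\,Hf\,\sgn(x)\,|x|^\beta\,dx=\frac1\pi\int_0^1 x^\beta\log\frac{x}{1-x}\,dx.
\]

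The one real computation is to match this last integral with the one in the statement. Applying the change of variables $x\mapsto 1/x$, which carries $(0,1)$ onto $(1,\infty)$, sends $\log\frac{x}{1-x}$ to $-\log(x-1)$ and contributes the Jacobian $x^{-2}$, I obtain
\[
\frac1\pi\int_0^1 x^\beta\log\frac{x}{1-x}\,dx=-\frac1\pi\int_1^\infty x^{-2-\beta}\log(x-1)\,dx=-\frac1\pi\int_1^\infty |x|^{-2-\beta}\log|1-x|\,dx,
\]
the last equality holding because $x>1$ on the interval of integration; this integral is finite for $\beta\in(-1,1)$ (a logarithmic singularity at $x=1$ and integrand comparable to $x^{-2-\beta}\log x$ at infinity). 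Feeding these two evaluations into the displayed ratio yields precisely
\[
1-\frac{2(1+\beta)}{\pi}\cot\!\Big(\tfrac{(1-\beta)\pi}{2}\Big)\int_1^\infty |x|^{-2-\beta}\log|1-x|\,dx,
\]
and since this is the value of the ratio for the admissible function $f=\chi_{[0,1]}$, it is a lower bound for $\|H\|_{L^2(|x|^\beta)\to L^2(|x|^\beta)}^2$, which is the desired inequality. I expect no serious obstacle here: Theorem~\ref{thm:cone1} furnishes the identity, $|x|^\beta$ is an $A_2$ weight so that $H\chi_{[0,1]}\in L^2(|x|^\beta)$, and the only care needed is in the bookkeeping of the substitution and the verification of convergence.
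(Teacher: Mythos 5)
Your proof is correct and follows essentially the same route as the paper: the paper tests the identity of Theorem~\ref{thm:cone1} against $f_r=\chi_{(0,r)}$ with $H f_r=\frac{1}{\pi}\log\frac{|x|}{|x-r|}$ and reduces the cross term to $\int_1^\infty |y|^{-2-\beta}\log|1-y|\,dy$ by the same type of substitution, the factor $r^{1+\beta}$ cancelling in the Rayleigh quotient. Your choice $r=1$ loses nothing, and your change of variables and convergence checks are accurate.
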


\begin{proof} By Theorem~\ref{thm:cone1}, with $f_r=\chi_{(0,r)}$ and $H(f_r)=\frac{1}{\pi}\log \frac{|x|}{|x-r|},$ we get that
\begin{eqnarray*} 
\int_\R (H(f_r))^2 |x|^{\beta} \, dx &=& \frac{r^{1+\beta}}{1+\beta}+ \frac 2\pi   \cot\left( \frac{(1-\beta)\pi}{2}\right)  \int_0^r   |x|^{\beta} \log \frac{|x|}{|x-r|} \, dx
\\
&=& \frac{r^{1+\beta}}{1+\beta}- \frac 2\pi   \cot\left( \frac{(1-\beta)\pi}{2}\right)  r^{1+\beta}  \int_1^\infty |y|^{-2-\beta} \log |1-y| \, dy
\\
&=& \|f_r\|_{L^2(|x|^\beta)}^2 \left( 1- \frac {2(1+\beta)}\pi   \cot\left( \frac{(1-\beta)\pi}{2}\right)  \int_1^\infty  |y|^{-2-\beta}  \log  {|1-y|} \, dy\right).
\end{eqnarray*}
Therefore,
$$
\|H\|_{L^2(|x|^\beta)\to L^2(|x|^\beta)}^2   \geq  \frac{ \| H(f_r)\|_{L^2(|x|^\beta)}^2 }{\|f_r\|_{L^2(|x|^\beta)}^2 } =  1- \frac {2(1+\beta)}\pi   \cot\left( \frac{(1-\beta)\pi}{2}\right)  \int_1^\infty  |x|^{-2-\beta}  \log  {|1-x|} \, dx.
$$
\end{proof}

\begin{remark}\label{re:sharp}
We observe that the right-hand side behaves as $ (1+\beta)^{-2}$ when $\beta \to -1^+$.
Furthermore, $|x|^\beta= e^{\beta \log |x|} = e^{ \beta \tfrac{\pi}{2} K(\sgn x)}= e^{f_1 + Kf_2}$, with $f_1 \equiv 0$ and $f_2= \tfrac{\beta \pi}{2}\sgn x$.  Hence,
$$[|x|^{\beta}]^2_{A_2(\hs)} \le \sec^2 (\tfrac{\beta\pi}{2})\sim(1+\beta)^{-2} \quad \hbox{as}~\beta\to-1^+.$$
Therefore, the dependence of $[w]_{A_2(\hs)}$ in Theorem \ref{thm:A2est} is sharp.
\end{remark}

\subsection{Monotone cones}\label{sec:moncones}

Let $\Omega$ be a monotone cone of aperture $\alpha \pi$ (see Figure~\ref{fig2}); then we must have $\alpha \in (1/2, 3/2)$.
Here the definition of $\Phi$ changes according to $\alpha \in (1/2,1)$ or $\alpha\in [1,3/2)$. We will only study the first case, since results from the second case can be deduced analogously.

Let $\alpha \in (1/2,1);$ for each  $\theta \in [1-\alpha,1/2]$, we define $\Phi : \R^2_+ \to \Omega$ such that
\begin{equation}\label{map2}
\Phi(z) = e^{i \theta \pi} z^\alpha = e^{i \theta \pi} e^{\alpha ( \log |z| + i \A(z) )},
\end{equation}
where we chose the branch cut $\{ i y : y \leq 0\}$. 

\bigskip

\medskip

\begin{figure}[h!]
 \begin{center}
\begin{tikzpicture}[scale=.48]
\filldraw[draw=white,bottom color=lightgray, top color=white] (-24, 5)-- (-24,0)--(-16,0)--(-16,5) ; 
\filldraw[draw=white,bottom color=lightgray, top color=white] (-12, 5)-- (-9,0)--(-5,0)--(-5,5) ; 
\filldraw[draw=white,bottom color=lightgray, top color=white] (0, 0) -- (5,1)--(6, 5)-- (0,5) ; 
\draw [thick, ->] (-24,0) -- (-16,0); 
\draw [->] (-20,-.2) -- (-20,5); 
\draw (-19,0) arc (0:180:1);
\node[above] at (-18.5,0.3) {{$\pi $}};
\draw[thick,->] (-15.3,2.5) arc (115:60:100pt);
\node [above] at (-13.5,3) {{$z^\alpha$}};
\draw [->] (-13,0) -- (-5,0); 
\draw [->] (-9,-.2) -- (-9,5); 
\draw [ thick] (-9,0) -- (-12,5); 
\draw [ thick] (-9,0) -- (-5,0); 
\draw (-8,0) arc (0:120:1); 
\node[above] at (-8,1) {{$\alpha \pi $}}; 
\draw[thick,->] (-4.3,2.5) arc (115:60:100pt);
\node [above] at (-2.8,3) {{$e^{i \theta \pi} $}};
\draw [ ->] (0,1) -- (8,1); 
\draw [ ->] (5,0) -- (5,5);  
\draw [ thick] (0, 0) -- (5,1);  
\draw [ thick] (5, 1) -- (6,5); 
\draw (5.15,1.7) arc (60:205:0.7);  
\node[above] at (3.7,1.5) {{$\alpha \pi $}};
\node[above] at (6.2,1.5) {{$\theta \pi $}};
\draw (6,1) arc (0:73:1);
\node[below] at (2,4.5) {{\large$\Omega$}}; 
\end{tikzpicture}
\caption{Monotone cone with aperture $\alpha \pi$.}
\label{fig2}
\end{center}
\end{figure}
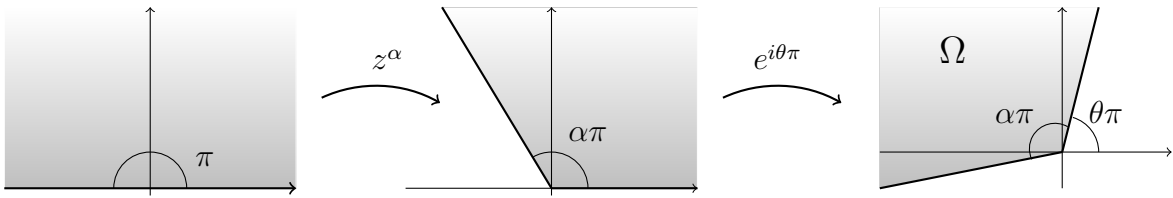

We next present several identities that follow from  \eqref{eq:rellich} and \eqref{eq:rellichbis}. Although it is possible that some of these identities may be known, we were unable to find them in the literature.

\begin{theorem}\label{thm:moncones}
Fix $\beta \in (0,1/2)$ and $\theta \in [\beta, 1/2]$. For any $f\in L^2(|x|^\beta)$ real-valued, it holds that
\begin{equation*}
\int_\R (Hf)^2 \,a_1 (\sgn x) |x|^{\beta} \, dx = \int_\R f^2 \,a_1(\sgn x)  |x|^{\beta}\, dx + 2 \int_\R f Hf \,a_2(\sgn x)  |x|^{\beta}\, dx,
\end{equation*} 
\begin{equation*}
\int_\R (Hf)^2 \,a_2 (\sgn x) |x|^{\beta} \, dx = \int_\R f^2 \,a_2(\sgn x)  |x|^{\beta}\, dx - 2 \int_\R f Hf \,a_1(\sgn x)  |x|^{\beta}\, dx,
\end{equation*} 
where $a_1$ and $a_2$ are the functions given by
\begin{align*}
a_1(s) &= \cos(\theta \pi) \frac{1+s}{2}  +  \cos((\theta-\beta)\pi) \frac{1-s}{2},  \\[0.2cm]
a_2 (s) &= \sin(\theta \pi) \frac{1+s}{2}  +  \sin((\theta-\beta)\pi) \frac{1-s}{2},
\end{align*}
for any $s$ in $\R$.
\end{theorem}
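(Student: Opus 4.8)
The plan is to follow the same strategy as in the proof of Theorem~\ref{thm:cone1}: for the conformal map $\Phi(z)=e^{i\theta\pi}z^\alpha$ with $\alpha=1-\beta$, I would compute $\re$ and $\im$ explicitly on $\R$ and substitute these expressions into the two Rellich identities \eqref{eq:rellich} and \eqref{eq:rellichbis} of Theorem~\ref{thm:rellich}. Note that $\beta\in(0,1/2)$ corresponds exactly to $\alpha\in(1/2,1)$, and $\theta\in[\beta,1/2]=[1-\alpha,1/2]$, so $\Phi$ is the map described in \eqref{map2}. Moreover, $|\Phi'|$ will turn out to be comparable to $|x|^{-\beta}$, so the space $L^2(|\Phi'|^{-1})$ coincides with $L^2(|x|^\beta)$ and Theorem~\ref{thm:rellich} applies to every real-valued $f\in L^2(|x|^\beta)$.

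First I would differentiate $\Phi$ on each half-line, taking care of the branch cut $\{iy:y\le 0\}$. For $x>0$ one has $\A(x)=0$, so $\Phi(x)=e^{i\theta\pi}x^\alpha$ and $\Phi'(x)=\alpha\,x^{\alpha-1}e^{i\theta\pi}$. For $x<0$ one has $\A(x)=\pi$, so $\Phi(x)=e^{i(\theta+\alpha)\pi}(-x)^\alpha$; differentiating in $x$ produces an extra factor $-1$, which I would absorb using $-e^{i(\theta+\alpha)\pi}=e^{i(\theta+\alpha-1)\pi}=e^{i(\theta-\beta)\pi}$ (recall $\alpha-1=-\beta$), giving $\Phi'(x)=\alpha\,(-x)^{-\beta}e^{i(\theta-\beta)\pi}$. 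In either case $|\Phi'(x)|=\alpha\,|x|^{-\beta}$, hence $|\Phi'(x)|^2=\alpha^2|x|^{-2\beta}$.

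Next I would compute $1/\Phi'=\overline{\Phi'}/|\Phi'|^2$ on each half-line. For $x>0$ this gives
$$\re=\alpha^{-1}\cos(\theta\pi)\,|x|^{\beta},\qquad \im=-\alpha^{-1}\sin(\theta\pi)\,|x|^{\beta},$$
and for $x<0$,
$$\re=\alpha^{-1}\cos((\theta-\beta)\pi)\,|x|^{\beta},\qquad \im=-\alpha^{-1}\sin((\theta-\beta)\pi)\,|x|^{\beta}.$$
Reading off the values at $s=\sgn x$ in the definitions of $a_1$ and $a_2$, where the factors $\tfrac{1+s}{2}$ and $\tfrac{1-s}{2}$ select the $x>0$ and $x<0$ branches, these collapse to $\re=\alpha^{-1}a_1(\sgn x)\,|x|^{\beta}$ and $\im=-\alpha^{-1}a_2(\sgn x)\,|x|^{\beta}$ for almost every $x\in\R$. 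Substituting into \eqref{eq:rellich} and cancelling the common factor $\alpha^{-1}$ yields the first claimed identity, the sign of $\im$ turning the term $-2\int f\,Hf\,\im$ into $+2\int f\,Hf\,a_2(\sgn x)|x|^\beta$; substituting into \eqref{eq:rellichbis} and cancelling $-\alpha^{-1}$ yields the second.

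I expect the only delicate point to be the determination of $\A(x)=\pi$ on the negative half-line, the correct differentiation there, and the bookkeeping of the phase $-e^{i(\theta+\alpha)\pi}=e^{i(\theta-\beta)\pi}$; once these are settled, the argument reduces to the same clean cancellation of the constant $\alpha^{-1}$ as in the proof of Theorem~\ref{thm:cone1}.
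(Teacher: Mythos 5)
Your proposal is correct and follows essentially the same route as the paper's proof: the same map $\Phi(z)=e^{i\theta\pi}z^{\alpha}$ with $\alpha=1-\beta$, the same branchwise computation of $\Phi'$ (your phase bookkeeping $-e^{i(\theta+\alpha)\pi}=e^{i(\theta-\beta)\pi}$ is just the polar form of the paper's identities $-\cos((\alpha+\theta)\pi)=\cos((\theta-\beta)\pi)$ and $-\sin((\alpha+\theta)\pi)=\sin((\theta-\beta)\pi)$), yielding $\re=\alpha^{-1}a_1(\sgn x)|x|^{\beta}$ and $\im=-\alpha^{-1}a_2(\sgn x)|x|^{\beta}$, followed by substitution into \eqref{eq:rellich} and \eqref{eq:rellichbis}. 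Your explicit remark that $|\Phi'|^{-1}=\alpha^{-1}|x|^{\beta}$, so that $L^2(|\Phi'|^{-1})=L^2(|x|^{\beta})$ and Theorem~\ref{thm:rellich} indeed applies, is a small point the paper leaves implicit.
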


\begin{remark}
Note that if $\beta \in (0,1/2)$ and $\theta \in [\beta, 1/2]$,  then $0\leq \theta-\beta< 1/2$, and
$$
\min\big\{\cos(\theta\pi), \sin(\theta\pi), \cos((\theta-\beta)\pi), \sin((\theta-\beta)\pi) \big\} \geq 0.
$$
Therefore, $a_1(\sgn x) \geq 0$ and $ a_2(\sgn x) \geq 0$  for all $x\in \R.$
\end{remark}

\begin{proof}
Fix $\beta \in (0,1/2)$ and $\theta \in [\beta, 1/2]$. Let $\alpha= 1-\beta \in (1/2,1)$, and consider the conformal map given in \eqref{map2}. We proceed as in the previous proof.  If $x>0$, then $\Phi(x) =  e^{i\theta \pi} x^\alpha.$
Differentiating with respect to $x$, we get
$$
\Phi'(x) = \alpha  \cos(\theta\pi) x^{\alpha-1} + i \alpha \sin(\theta\pi) x^{\alpha-1}.
$$
Similarly, if $x<0$, then $\Phi(x) =  e^{i(\alpha+\theta) \pi} (-x)^\alpha.$ Therefore,
$$
\Phi'(x) = - \alpha \cos((\alpha+\theta)\pi)  (-x)^{\alpha-1}  - i \alpha  \sin((\alpha+\theta)\pi)  (-x)^{\alpha-1}.
$$
Since $|\Phi'(x)|^2 = \alpha^2 |x|^{2(\alpha-1)}$, and $\beta=1-\alpha$, it follows that
\begin{align*}
\re &=\alpha^{-1}\left(\cos(\theta \pi) \tfrac{1+\sgn x}{2}  - \cos((\alpha+\theta)\pi) \tfrac{1-\sgn x}{2}  \right) |x|^{1-\alpha} =(1- \beta)^{-1} a_1(\sgn x)|x|^\beta
\end{align*}
and
\begin{align*}
\im &=-\alpha^{-1} \left(\sin(\theta \pi) \tfrac{1+\sgn x}{2}  -  \sin((\alpha+\theta)\pi) \tfrac{1-\sgn x}{2} \right) |x|^{1-\alpha} = -(1-\beta)^{-1} a_2(\sgn x)|x|^\beta,
\end{align*}
where $a_1$ and $a_2$ are defined as in the statement. 
Substituting in \eqref{eq:rellich} and \eqref{eq:rellichbis} we conclude the desired results.
\end{proof}

When $\beta=\theta,$ Theorem~\ref{thm:moncones}  gives the following result:

\begin{corollary}\label{coro:moncones} If $\beta \in (0,1/2)$ and $f\in L^2(|x|^\beta)$ is real-valued, it holds that
\begin{align}
\cos(\beta\pi)\int_0^\infty& (Hf)^2\,|x|^\beta\,dx+ \int_{-\infty}^0 (Hf)^2|x|^\beta\,dx\nonumber\\
=&\cos(\beta\pi)\int_0^\infty f^2\,|x|^\beta\,dx+ \int_{-\infty}^0 f^2\,|x|^\beta\,dx+ 2\sin(\beta\pi) \int_0^\infty f Hf\,|x|^\beta\,dx,\label{eq:moncones1}
\end{align}
\begin{align}
\sin(\beta\pi)\int_0^\infty (Hf)^2\,|x|^\beta\,dx=&\sin(\beta\pi)\int_0^\infty f^2\,|x|^\beta\,dx\nonumber\\
&-2\cos(\beta\pi)\int_0^\infty f Hf\,|x|^\beta\,dx-2\int_{-\infty}^0 f Hf\,|x|^\beta\,dx.\label{eq:moncones2}
\end{align}

\end{corollary}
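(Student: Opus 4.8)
The plan is to obtain the two identities as the special case $\theta=\beta$ of Theorem~\ref{thm:moncones}; since $\theta=\beta$ is the left endpoint of the admissible range $\theta\in[\beta,1/2]$, this choice is legitimate. First I would substitute $\theta=\beta$ into the definitions of $a_1$ and $a_2$. Because $\theta-\beta=0$, we have $\cos((\theta-\beta)\pi)=1$ and $\sin((\theta-\beta)\pi)=0$, so the two weight functions collapse to
$$a_1(s)=\cos(\beta\pi)\,\frac{1+s}{2}+\frac{1-s}{2},\qquad a_2(s)=\sin(\beta\pi)\,\frac{1+s}{2}.$$

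Next I would evaluate $a_1(\sgn x)$ and $a_2(\sgn x)$ on the two half-lines. For $x>0$ one has $\sgn x=1$, giving $a_1=\cos(\beta\pi)$ and $a_2=\sin(\beta\pi)$; for $x<0$ one has $\sgn x=-1$, giving $a_1=1$ and $a_2=0$. The upshot is that the weight $a_2(\sgn x)\abs{x}^{\beta}$ is supported on the positive half-line, while $a_1(\sgn x)\abs{x}^{\beta}$ equals $\cos(\beta\pi)\abs{x}^{\beta}$ there and $\abs{x}^{\beta}$ on the negative half-line.

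Finally I would split each integral over $\R$ in the two identities of Theorem~\ref{thm:moncones} into integrals over $(0,\infty)$ and $(-\infty,0)$ and insert these values. The first identity becomes \eqref{eq:moncones1}: the $a_2$-term on the negative half-line drops out because $a_2(-1)=0$, leaving only the $\sin(\beta\pi)$ contribution over $(0,\infty)$. The second identity becomes \eqref{eq:moncones2}: here the left-hand side and the $f^2$-term are supported on $(0,\infty)$ since $a_2(-1)=0$, whereas the $a_1$-term on the right splits into the $\cos(\beta\pi)$ piece over $(0,\infty)$ and the full piece over $(-\infty,0)$, with the sign inherited from the $-2\int f\,Hf$ term of Theorem~\ref{thm:moncones}. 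There is no genuine obstacle here; the entire content is the algebraic simplification of $a_1,a_2$ at $s=\pm1$, and the only point requiring minor care is keeping track of the four boundary values together with the sign in front of the $f\,Hf$ term in the second identity.
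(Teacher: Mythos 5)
Your proposal is correct and coincides with the paper's own treatment: the paper derives Corollary~\ref{coro:moncones} precisely by specializing Theorem~\ref{thm:moncones} to $\theta=\beta$, and your evaluation of $a_1,a_2$ at $s=\pm 1$ (namely $a_1(1)=\cos(\beta\pi)$, $a_1(-1)=1$, $a_2(1)=\sin(\beta\pi)$, $a_2(-1)=0$) together with splitting the integrals over $(0,\infty)$ and $(-\infty,0)$ reproduces \eqref{eq:moncones1} and \eqref{eq:moncones2} exactly.
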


\medskip

Several interesting identities follow as particular cases of Corollary~\ref{coro:moncones}:

\medskip

 \underline{Identity~\eqref{eq:moncones1} and $\supp(f)\subset (0,\infty):$} If $\beta \in (0,1/2)$ and $f\in L^2(|x|^\beta)$ is real-valued, then
\begin{align*}
\cos(\beta\pi)\int_0^\infty& (Hf)^2\,|x|^\beta\,dx+ \int_{-\infty}^0 (Hf)^2|x|^\beta\,dx\\
=&\cos(\beta\pi)\int_0^\infty f^2\,|x|^\beta\,dx+ 2\sin(\beta\pi) \int_0^\infty f Hf\,|x|^\beta\,dx.
\end{align*}
As $\beta\to 1/2$ we obtain
\begin{align*}
 \int_{-\infty}^0 (Hf)^2|x|^{\frac{1}{2}}\,dx= 2 \int_0^\infty f Hf\,|x|^{\frac{1}{2}}\,dx.
\end{align*}

\bigskip

\underline{Identity~\eqref{eq:moncones1} and $\supp(f)\subset (-\infty,0):$} If $\beta \in (0,1/2)$ and $f\in L^2(|x|^\beta)$ is real-valued, then
\begin{align*}
\cos(\beta\pi)\int_0^\infty (Hf)^2\,|x|^\beta\,dx+ \int_{-\infty}^0 (Hf)^2|x|^\beta\,dx
= \int_{-\infty}^0 f^2\,|x|^\beta\,dx.
\end{align*}
As $\beta\to 1/2,$ we have
\begin{align*}
 \int_{-\infty}^0 (Hf)^2|x|^{\frac{1}{2}}\,dx= \int_{-\infty}^0 f^2\,|x|^{\frac{1}{2}}\,dx.
 \end{align*}

\bigskip

 \underline{Identity~\eqref{eq:moncones2} and $\supp(f)\subset (0,\infty):$} If $\beta \in (0,1/2)$ and $f\in L^2(|x|^\beta)$ is real-valued, then
\begin{align*}
\sin(\beta\pi)\int_0^\infty (Hf)^2\,|x|^\beta\,dx=\sin(\beta\pi)\int_0^\infty f^2\,|x|^\beta\,dx
-2\cos(\beta\pi)\int_0^\infty f Hf\,|x|^\beta\,dx.
\end{align*}
As $\beta\to 1/2,$ we get
\begin{align*}
\int_0^\infty (Hf)^2\,|x|^{\frac{1}{2}}\,dx=\int_0^\infty f^2\,|x|^{\frac{1}{2}}\,dx.
\end{align*}

\bigskip

 \underline{Identity~\eqref{eq:moncones2} and $\supp(f)\subset (-\infty,0)$}: If $\beta \in (0,1/2)$ and $f\in L^2(|x|^\beta)$ is real-valued, then
\begin{align*}
\sin(\beta\pi)\int_0^\infty (Hf)^2\,|x|^\beta\,dx=-2\int_{-\infty}^0 f Hf\,|x|^\beta\,dx.
\end{align*}
As $\beta\to 1/2,$ it follows that
\begin{align*}
\int_0^\infty (Hf)^2\,|x|^{\frac{1}{2}}\,dx=-2\int_{-\infty}^0 f Hf\,|x|^{\frac{1}{2}}\,dx.
\end{align*}

\section{Rellich identities for the Hilbert transform in $L^p$}\label{sec:rellichLp}
 
Theorem~\ref{thm:rellich} states Rellich's identities for the Hilbert transform for functions  in weighted $L^2$-spaces. In this section we present versions of such identities  for pairs of functions on  weighted Lebesgue spaces. Our main result is the following theorem.

 \begin{theorem}\label{thm:rellichLp} 
 Let  $\Phi$ be a conformal map as described in Section~\ref{sec:cm} such that $|\Phi'|\in A_p\cap A_{p'}$ for some $1<p<\infty.$ If $f\in L^p(|\Phi'|^{1-p})$ and  $g\in L^{p'}(|\Phi'|^{1-p'})$ are real-valued, then
 \begin{align}
 \int_\mathbb R \Big(  HfHg-fg  \Big)  \re dx &=  -\int_\mathbb R \Big( fHg+ gHf \Big) \im dx,\label{eq:rellichLp}\\
 \int_\mathbb R \Big(  HfHg-fg  \Big)  \im dx &=  \int_\mathbb R \Big( fHg+ gHf \Big) \re dx.\label{eq:rellichLpbis}
\end{align}
 \end{theorem}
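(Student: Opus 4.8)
The plan is to deduce the pair of identities \eqref{eq:rellichLp}--\eqref{eq:rellichLpbis} from the quadratic identities \eqref{eq:rellich}--\eqref{eq:rellichbis} by polarization, after first checking that all integrals make sense and depend continuously on $f$ and $g$. To organize this, introduce the symmetric bilinear forms
$$
B_1(f,g)=\int_\R (HfHg-fg)\,\re\,dx+\int_\R(fHg+gHf)\,\im\,dx,
$$
$$
B_2(f,g)=\int_\R (HfHg-fg)\,\im\,dx-\int_\R(fHg+gHf)\,\re\,dx,
$$
so that the assertion of the theorem is precisely $B_1(f,g)=B_2(f,g)=0$.

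First I would record the analytic prerequisites. Since $\tfrac1p+\tfrac1{p'}=1$ gives $p+p'=pp'$ and hence $(1-p)(1-p')=1$, the duality $w\in A_p\iff w^{1-p'}\in A_{p'}$ shows that the hypothesis $|\Phi'|\in A_{p'}$ forces $|\Phi'|^{1-p}\in A_p$ and, symmetrically, $|\Phi'|\in A_p$ forces $|\Phi'|^{1-p'}\in A_{p'}$. Therefore $H$ is bounded on both $L^p(|\Phi'|^{1-p})$ and $L^{p'}(|\Phi'|^{1-p'})$, so $Hf$ and $Hg$ lie in the same spaces as $f$ and $g$. Next, using $|\re|,|\im|\le|\Phi'|^{-1}$ together with the splitting $|\Phi'|^{-1}=|\Phi'|^{-1/p'}|\Phi'|^{-1/p}$, Hölder's inequality with exponents $p,p'$ yields, for instance,
$$
\int_\R |fg|\,|\Phi'|^{-1}\,dx\le \|f\|_{L^p(|\Phi'|^{1-p})}\,\|g\|_{L^{p'}(|\Phi'|^{1-p'})},
$$
and the analogous bound holds for each of the products $HfHg$, $fHg$, $gHf$ after invoking the boundedness of $H$. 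This shows that $B_1$ and $B_2$ are well defined and satisfy $|B_i(f,g)|\ls \|f\|_{L^p(|\Phi'|^{1-p})}\|g\|_{L^{p'}(|\Phi'|^{1-p'})}$, so each $B_i$ is continuous on $L^p(|\Phi'|^{1-p})\times L^{p'}(|\Phi'|^{1-p'})$.

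The core of the argument is polarization on a common dense subspace. For any real-valued $h\in L^2(|\Phi'|^{-1})$, Theorem~\ref{thm:rellich} applied to $h$ is exactly the statement $B_1(h,h)=0$ and $B_2(h,h)=0$ (after rearranging \eqref{eq:rellich} and \eqref{eq:rellichbis}). Because $L^2(|\Phi'|^{-1})$ is a vector space and each $B_i$ is a symmetric bilinear form, the polarization identity $B_i(f,g)=\tfrac12\big(B_i(f+g,f+g)-B_i(f,f)-B_i(g,g)\big)$, combined with the linearity $H(f+g)=Hf+Hg$, gives $B_1(f,g)=B_2(f,g)=0$ for all real-valued $f,g\in L^2(|\Phi'|^{-1})$. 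To pass to the general case I would use density: recall that $|\Phi'|\in A_2$, hence $|\Phi'|^{-1}\in A_2$, so every bounded compactly supported function belongs to $L^2(|\Phi'|^{-1})$, and such functions are dense in both $L^p(|\Phi'|^{1-p})$ and $L^{p'}(|\Phi'|^{1-p'})$. Approximating $f$ and $g$ by such functions $f_n,g_n$ in the respective norms, one has $B_i(f_n,g_n)=0$ by the previous step, and the continuity of $B_i$ gives $B_i(f_n,g_n)\to B_i(f,g)$, whence $B_1(f,g)=B_2(f,g)=0$; these two equalities are precisely \eqref{eq:rellichLp} and \eqref{eq:rellichLpbis}.

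The main obstacle is conceptual rather than computational: because $f$ and $g$ belong to different, mutually conjugate, weighted spaces, one cannot simply feed $f+g$ into the quadratic identity of Theorem~\ref{thm:rellich}. The resolution is to perform the polarization only on the intersection $L^2(|\Phi'|^{-1})$, where the quadratic identity is available, and then propagate the resulting bilinear identity to the full product space by continuity. Accordingly, the most delicate point is establishing that continuity, which rests on the $A_p$/$A_{p'}$ weight arithmetic (the relation $(1-p)(1-p')=1$) and the Hölder splitting $|\Phi'|^{-1}=|\Phi'|^{-1/p'}|\Phi'|^{-1/p}$ to control all four products uniformly by $\|f\|_{L^p(|\Phi'|^{1-p})}\|g\|_{L^{p'}(|\Phi'|^{1-p'})}$.
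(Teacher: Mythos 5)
Your proof is correct, and it shares the paper's overall skeleton---deduce the bilinear identities from the quadratic ones of Theorem~\ref{thm:rellich} by polarization over $f+g$, $f$, $g$, and then extend by density using precisely the same weight arithmetic ($|\Phi'|\in A_{p'}\Leftrightarrow|\Phi'|^{1-p}\in A_p$ and $|\Phi'|\in A_{p}\Leftrightarrow|\Phi'|^{1-p'}\in A_{p'}$) and the same H\"older bound---but the central step is executed by a genuinely different, and in fact leaner, route. The paper does not polarize \eqref{eq:rellich} directly: it first combines \eqref{eq:rellich} with the product identity \eqref{eq:rellichLp1}, $(Hf)^2-f^2=2H(fHf)$ (cited from Grafakos), to obtain $\int_\R H(fHf)\,\re dx=-\int_\R fHf\,\im dx$ for continuous compactly supported $f$, polarizes that equation, and then undoes the maneuver with the polarized form $HfHg-fg=H(fHg+gHf)$ of the same product identity. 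Your argument shows that this detour is dispensable, since the two uses of \eqref{eq:rellichLp1} cancel each other: polarizing the symmetric bilinear forms $B_1,B_2$ directly yields \eqref{eq:rellichLp}--\eqref{eq:rellichLpbis} on all of $L^2(|\Phi'|^{-1})$ at once, not merely for continuous compactly supported functions, and never invokes the Hilbert-transform product identity. The one point you leave implicit is that $B_1$ and $B_2$ are finite (hence honestly bilinear) on $L^2(|\Phi'|^{-1})\times L^2(|\Phi'|^{-1})$, which is needed before the polarization identity can be expanded; this is just the $p=2$ instance of your own H\"older estimate and holds because $|\Phi'|\in A_2$ automatically (Section~\ref{sec:cm}), so it is a gap of exposition only. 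What the paper's route makes visible is the algebraic mechanism $HfHg-fg=H(fHg+gHf)$ underlying the theorem; what yours buys is economy of means---no auxiliary lemma at all---and a cleaner functional-analytic packaging of the density step.
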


\begin{proof} We first prove \eqref{eq:rellichLp} for $f$ and $g$ continuous with compact support. Using \eqref{eq:rellich} and the identity (see \cite[(5.1.23), p. 320]{MR3243734})
 \begin{equation}\label{eq:rellichLp1}
(Hf)^2 -f^2 = 2 H(f Hf), 
\end{equation}
we have that 
\begin{equation*}
\int_\mathbb R H( fHf)  \re dx = -\int_\mathbb R f Hf \im dx.
\end{equation*}
 Applying this equality to the functions $f+g$, $f$ and $g$, it follows that
$$
 \int_\mathbb R H\Big(   fHg+ gHf \Big)  \re dx = - \int_\mathbb R \Big( fHg+ gHf \Big) \im dx.
 $$
 Using \eqref{eq:rellichLp1} for the function $f+g$, $f$ and $g$,  we obtain
$$
HfHg-fg = H\Big(   fHg+ gHf \Big).
$$
These last two equalities give \eqref{eq:rellichLp}.

 The identity \eqref{eq:rellichLpbis} for $f$ and $g$ continuous with compact suppport is proved similarly using \eqref{eq:rellichbis}.
 
 The general case can be obtained by density once it is shown that the integrals in \eqref{eq:rellichLp} and \eqref{eq:rellichLpbis} are absolutely convergent for $f\in L^p(|\Phi'|^{1-p})$ and  $g\in L^{p'}(|\Phi'|^{1-p'}).$ For such functions,  using  H\"older's inequality, we obtain
\begin{equation*}
\int_\mathbb R   |f\,g| \, \re dx  \le   \left(\int_\mathbb R  |f|^p   |\Phi'|^{1-p} dx\right)^{\frac{1}{p}} \left(\int_\mathbb R  |g|^{p'}   |\Phi'|^{1-p'} dx\right)^{\frac{1}{p'}} <\infty.
\end{equation*}
We also have
\begin{align*}
\int_\mathbb R   |Hf\, Hg| \, \re dx
  &\le   \left(\int_\mathbb R  |Hf|^p   |\Phi'|^{1-p} dx\right)^{\frac{1}{p}} \left(\int_\mathbb R  |Hg|^{p'}   |\Phi'|^{1-p'} dx\right)^{\frac{1}{p'}} \\
&\lesssim   \left(\int_\mathbb R  |f|^p   |\Phi'|^{1-p} dx\right)^{\frac{1}{p}} \left(\int_\mathbb R  |g|^{p'}   |\Phi'|^{1-p'} dx\right)^{\frac{1}{p'}} <\infty,
\end{align*}
where in the last inequality we have used the boundedness of the Hilbert transform noting  that $|\Phi'|^{1-p} \in A_p$ since $|\Phi'|\in A_{p'}$ and  $ |\Phi'|^{1-p'} \in A_{p'}$ since $|\Phi'|\in A_p$. 
A similar reasoning is applied for the integrals on the right hand side of \eqref{eq:rellichLp} and \eqref{eq:rellichLpbis}.

  \end{proof}

 As an application, we next present examples of the identities of  Theorem~\ref{thm:rellichLp} associated to power weights. The next corollary follows by considering the conformal map $\Phi$ given in \eqref{map1} and the corresponding computations done in Section~\ref{sec:symcones}. 

 \begin{corollary} Let $1<p<\infty$ and $\beta\in (-1,1).$ If $f\in L^p(|x|^{\beta(p-1)})$ and $g\in L^{p'}(|x|^{\beta(p'-1)})$ are real-valued then
 \begin{align}
   \int_\mathbb R \Big(  HfHg-fg  \Big)  |x|^{\beta}  dx =  \cot\left(\frac{(1-\beta)\pi}{2}\right) \int_\mathbb R \Big( fHg+ gHf \Big) \sgn(x)  |x|^{\beta} dx,\\
 \int_\mathbb R \Big(  HfHg-fg  \Big)  \sgn(x)  |x|^{\beta}  dx =  - \tan\left(\frac{(1-\beta)\pi}{2}\right) \int_\mathbb R \Big( fHg+ gHf \Big)  |x|^{\beta} dx.
 \end{align}
 \end{corollary}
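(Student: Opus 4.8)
The plan is to specialize Theorem~\ref{thm:rellichLp} to the conformal map $\Phi$ from \eqref{map1}, reusing the computations already carried out in the proof of Theorem~\ref{thm:cone1}. First I would verify the hypothesis $|\Phi'|\in A_p\cap A_{p'}$: since $|\Phi'(x)|^2=\alpha^2|x|^{2(\alpha-1)}$ with $\alpha=1-\beta$, we have $|\Phi'|\sim |x|^{-\beta}$, which lies in $A_p\cap A_{p'}$ precisely when $-\beta(p-1)\in(-1,\infty)$ and the dual condition hold; for $\beta\in(-1,1)$ and $1<p<\infty$ this is exactly the standard power-weight characterization, so the hypothesis is satisfied and Theorem~\ref{thm:rellichLp} applies.

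Next I would identify the weighted spaces. Because $|\Phi'|\sim |x|^{-\beta}$, the space $L^p(|\Phi'|^{1-p})$ becomes $L^p(|x|^{-\beta(1-p)})=L^p(|x|^{\beta(p-1)})$, which matches the stated hypothesis on $f$; symmetrically $L^{p'}(|\Phi'|^{1-p'})=L^{p'}(|x|^{\beta(p'-1)})$ matches the hypothesis on $g$. This confirms that the function-space assumptions in the corollary are the correct translation of those in Theorem~\ref{thm:rellichLp}.

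The core of the argument is then purely substitutional. From the proof of Theorem~\ref{thm:cone1} I would recall the boundary values
$$
\re = \alpha^{-1}\sin\!\left(\tfrac{\alpha\pi}{2}\right)|x|^{1-\alpha},\qquad
\im = -\alpha^{-1}\cos\!\left(\tfrac{\alpha\pi}{2}\right)\sgn(x)\,|x|^{1-\alpha},
$$
with $\alpha=1-\beta$, so that $\re\sim |x|^{\beta}$ up to the constant $\alpha^{-1}\sin(\tfrac{(1-\beta)\pi}{2})$ and $\im\sim \sgn(x)|x|^{\beta}$ up to $-\alpha^{-1}\cos(\tfrac{(1-\beta)\pi}{2})$. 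Substituting these into \eqref{eq:rellichLp} and dividing by the common factor $\alpha^{-1}\sin(\tfrac{(1-\beta)\pi}{2})$ replaces $\re$ by $|x|^\beta$ and turns the coefficient $-\im/\re$ into $\cot(\tfrac{(1-\beta)\pi}{2})\sgn(x)$, yielding the first identity. Substituting instead into \eqref{eq:rellichLpbis} and dividing by $-\alpha^{-1}\cos(\tfrac{(1-\beta)\pi}{2})$ replaces $\im$ by $\sgn(x)|x|^\beta$ and produces the factor $-\tan(\tfrac{(1-\beta)\pi}{2})$ on the right, giving the second identity.

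The main point requiring care—rather than a genuine obstacle—is tracking the signs and the division by the common constants so that the cotangent and tangent factors emerge with the correct sign, and confirming that dividing through is legitimate, i.e.\ that $\sin(\tfrac{(1-\beta)\pi}{2})$ and $\cos(\tfrac{(1-\beta)\pi}{2})$ are nonzero for $\beta\in(-1,1)$, which they are since $\tfrac{(1-\beta)\pi}{2}\in(0,\pi)$. Everything else is a direct appeal to Theorem~\ref{thm:rellichLp} together with the already-verified $A_p\cap A_{p'}$ membership, so no new analytic input is needed.
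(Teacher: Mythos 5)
Your proposal follows exactly the same route as the paper: the paper's entire justification for this corollary is that it ``follows by considering the conformal map $\Phi$ given in \eqref{map1} and the corresponding computations done in Section~\ref{sec:symcones},'' and your identification of the weighted spaces, the substitution of $\re=\alpha^{-1}\sin(\frac{\alpha\pi}{2})|x|^{1-\alpha}$ and $\im=-\alpha^{-1}\cos(\frac{\alpha\pi}{2})\sgn(x)|x|^{1-\alpha}$ with $\alpha=1-\beta$ into \eqref{eq:rellichLp}--\eqref{eq:rellichLpbis}, and the division producing the $\cot$ and $-\tan$ factors are all the intended argument, carried out correctly.

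However, two of your explicit verification claims are false as stated, and both point to restrictions that the paper's own statement also glosses over. First, $|\Phi'|\sim|x|^{-\beta}$ does \emph{not} lie in $A_p\cap A_{p'}$ for every $\beta\in(-1,1)$ and $1<p<\infty$: the power-weight characterization gives $|x|^{-\beta}\in A_p$ iff $1-p<\beta<1$ and $|x|^{-\beta}\in A_{p'}$ iff $1-p'<\beta<1$, so the hypothesis of Theorem~\ref{thm:rellichLp} holds precisely when $\beta\in\bigl(1-\min(p,p'),\,1\bigr)$, which coincides with $(-1,1)$ only for $p=2$. For instance, with $p=3$ and $\beta=-3/4$ one has $|x|^{\beta(p-1)}=|x|^{-3/2}\notin A_3$, so $H$ is not bounded on $L^3(|x|^{-3/2})$ and the density argument in the proof of Theorem~\ref{thm:rellichLp} breaks down; the corollary cannot be derived this way on that part of the stated parameter range. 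Second, your justification for dividing by $\cos\bigl(\frac{(1-\beta)\pi}{2}\bigr)$ --- that it is nonzero because $\frac{(1-\beta)\pi}{2}\in(0,\pi)$ --- fails at $\beta=0$, where this cosine vanishes and, correspondingly, the factor $\tan\bigl(\frac{(1-\beta)\pi}{2}\bigr)$ in the second identity is undefined; at $\beta=0$ one should instead read \eqref{eq:rellichLpbis} directly, which gives $\int_\R (fHg+gHf)\,dx=0$. Since the paper states the corollary for all $\beta\in(-1,1)$ and all $p$ without comment, your write-up is faithful to the paper's proof; but a watertight proof requires restricting to $\beta\in\bigl(1-\min(p,p'),1\bigr)$ and treating $\beta=0$ separately in the second identity, rather than asserting that these checks pass in general.
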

 
 \bigskip
 
 \subsection*{Acknowledgements}
We thank the referee for their careful reading of the manuscript and their suggestions.
 \bibliographystyle{plain}
\bibliography{CNSbiblio}

\begin{thebibliography}{10}

\bibitem{AA2022}
S.~Agrawal and T.~Alazard.
\newblock Refined rellich boundary inequalities for the derivatives of a
  harmonic function.
\newblock {\em https://doi.org/10.48550/arXiv.2205.04756}, 2022.

\bibitem{MR4542711}
M.~J. Carro, V.~Naibo, and C.~Ortiz-Caraballo.
\newblock The {N}eumann problem in graph {L}ipschitz domains in the plane.
\newblock {\em Math. Ann.}, 385(1-2):17--57, 2023.

\bibitem{MR2854179}
D.~Cruz-Uribe, J.M. Martell, and C.~P\'{e}rez.
\newblock Sharp weighted estimates for classical operators.
\newblock {\em Adv. Math.}, 229(1):408--441, 2012.

\bibitem{MR2091359}
L.~Escauriaza and M.~Mitrea.
\newblock Transmission problems and spectral theory for singular integral
  operators on {L}ipschitz domains.
\newblock {\em J. Funct. Anal.}, 216(1):141--171, 2004.

\bibitem{MR1114608}
R.~A. Fefferman, C.~E. Kenig, and J.~Pipher.
\newblock The theory of weights and the {D}irichlet problem for elliptic
  equations.
\newblock {\em Ann. of Math. (2)}, 134(1):65--124, 1991.

\bibitem{MR783598}
J.~Garc\'{\i}a-Cuerva~Abengoza.
\newblock Theory of weights and functions of bounded mean oscillation.
\newblock {\em Publ. Sec. Mat. Univ. Aut{\`o}noma Barcelona}, 26(1):111--130,
  1982.

\bibitem{MR3243734}
L.~Grafakos.
\newblock {\em Classical {F}ourier analysis}, volume 249 of {\em Graduate Texts
  in Mathematics}.
\newblock Springer, New York, third edition, 2014.

\bibitem{MR121608}
H.~Helson and G.~Szeg\"{o}.
\newblock A problem in prediction theory.
\newblock {\em Ann. Mat. Pura Appl. (4)}, 51:107--138, 1960.

\bibitem{MR312139}
R.~Hunt, B.~Muckenhoupt, and R.~Wheeden.
\newblock Weighted norm inequalities for the conjugate function and {H}ilbert
  transform.
\newblock {\em Trans. Amer. Math. Soc.}, 176:227--251, 1973.

\bibitem{MR2912709}
T.~Hyt\"{o}nen.
\newblock The sharp weighted bound for general {C}alder\'{o}n-{Z}ygmund
  operators.
\newblock {\em Ann. of Math. (2)}, 175(3):1473--1506, 2012.

\bibitem{MR2993026}
T.~Hyt\"{o}nen, M.~Lacey, H.~Martikainen, T.~Orponen, M.C. Reguera, E.~Sawyer,
  and I.~Uriarte-Tuero.
\newblock Weak and strong type estimates for maximal truncations of
  {C}alder\'{o}n-{Z}ygmund operators on {$A_p$} weighted spaces.
\newblock {\em J. Anal. Math.}, 118(1):177--220, 2012.

\bibitem{MR3176607}
T.~Hyt\"{o}nen, C.~P\'{e}rez, S.~Treil, and A.~Volberg.
\newblock Sharp weighted estimates for dyadic shifts and the {$A_2$}
  conjecture.
\newblock {\em J. Reine Angew. Math.}, 687:43--86, 2014.

\bibitem{MR545265}
C.~Kenig.
\newblock Weighted {H}ardy spaces on {L}ipschitz domains.
\newblock In {\em Harmonic analysis in {E}uclidean spaces ({P}roc. {S}ympos.
  {P}ure {M}ath., {W}illiams {C}oll., {W}illiamstown, {M}ass., 1978), {P}art
  1}, Proc. Sympos. Pure Math., XXXV, Part, pages 263--274. Amer. Math. Soc.,
  Providence, R.I., 1979.

\bibitem{MR556889}
C.~Kenig.
\newblock Weighted {$H^{p}$} spaces on {L}ipschitz domains.
\newblock {\em Amer. J. Math.}, 102(1):129--163, 1980.

\bibitem{MR3085756}
A.~Lerner.
\newblock A simple proof of the {$A_2$} conjecture.
\newblock {\em Int. Math. Res. Not. IMRN}, (14):3159--3170, 2013.

\bibitem{MR2354322}
S.~Petermichl.
\newblock The sharp bound for the {H}ilbert transform on weighted {L}ebesgue
  spaces in terms of the classical {$A_p$} characteristic.
\newblock {\em Amer. J. Math.}, 129(5):1355--1375, 2007.

\bibitem{MR2367098}
S.~Petermichl.
\newblock The sharp weighted bound for the {R}iesz transforms.
\newblock {\em Proc. Amer. Math. Soc.}, 136(4):1237--1249, 2008.

\bibitem{MR2456}
F.~Rellich.
\newblock Darstellung der {E}igenwerte von {$\Delta u+\lambda u=0$} durch ein
  {R}andintegral.
\newblock {\em Math. Z.}, 46:635--636, 1940.

\end{thebibliography}

 \end{document}